\tikzset{
    %Define standard arrow tip
    >=stealth',
    %Define style for boxes
    punkt/.style={
           rectangle,
           rounded corners,
           draw=black, very thick,
           text width=6.5em,
           minimum height=2em,
           text centered},
    % Define arrow style
    pil/.style={
           ->,
           thick,
           shorten <=2pt,
           shorten >=2pt,}
}
\tikzset{every loop/.style={min distance=2mm,in=225,out=135,looseness=10}}
\tikzset{->-/.style={decoration={
  markings,
  mark=at position #1 with {\arrow{>}}},postaction={decorate}}}
\newcommand{\redarrowdraw}[3]{\draw[->-=#1, thick, color=red] (#2) to (#3);}
\newcommand{\arrowdraw}[3]{\draw[->-=#1, thick] (#2) to (#3);}
\newcommand{\smallredarrowdraw}[3]{\draw[->-=#1, color=red] (#2) to (#3);}
\newcommand{\smallarrowdraw}[3]{\draw[->-=#1] (#2) to (#3);}
\newcommand\GreenL{\mathcal{L}}
\newcommand\GreenR{\mathcal{R}}
\newcommand\GreenH{\mathcal{H}}  
\newcommand\GreenD{\mathcal{D}}
\newtheorem{theorem}{Theorem}[section]
\newtheorem{question}[theorem]{Question}
\newtheorem{lemma}[theorem]{Lemma}
\newtheorem{corollary}[theorem]{Corollary}
\newtheorem{proposition}[theorem]{Proposition}
\theoremstyle{remark}
\newcommand{\sgo}{S\Gamma(1)}
\newcommand{\gh}{\mathcal{H}}
\begin{document}

\title[Maximal Subgroups of Special Inverse Monoids]{Maximal Subgroups of Finitely Presented Special Inverse Monoids}

\keywords{special inverse monoid, units, group $\GreenH$-classes, maximal subgroup}
\subjclass[2010]{20F05; 20M18, 20M05}
\maketitle

\begin{center}
    ROBERT D. GRAY\footnote{School of Mathematics, University of East Anglia, Norwich NR4 7TJ, England.
Email \texttt{Robert.D.Gray@uea.ac.uk}.}
\ and \ MARK KAMBITES\footnote{Department of Mathematics, University of Manchester, Manchester M13 9PL, England. Email \texttt{Mark.Kambites@manchester.ac.uk}.

This research was supported by the 
EPSRC-funded projects 
EP/N033353/1 `Special inverse monoids: subgroups, structure, geometry, rewriting systems and the word problem' 
and 
EP/V032003/1 `Algorithmic, topological and geometric aspects of infinite groups, monoids and inverse semigroups', and by a London Mathematical Society Research Reboot Grant.
} 
\end{center}

\begin{abstract}
We study the maximal subgroups (also known as group $\GreenH$-classes) of finitely presented special inverse monoids. We show that the maximal subgroups which can arise in such monoids are exactly the recursively presented groups, and moreover every such maximal subgroup can also arise in the $E$-unitary case. We also prove that the possible groups of units are exactly the finitely generated recursively presented groups; this improves upon a result of, and answers a question of, the first author and Ru\v{s}kuc. These results give the first significant insight into the maximal subgroups of such monoids beyond the group of units, and the results together demonstrate that (perhaps surprisingly) it is possible for the subgroup structure to have a complexity which significantly exceeds that of the group of units.  We also observe that a finitely presented special inverse monoid (even an $E$-unitary one) may have infinitely many pairwise non-isomorphic maximal subgroups; this contrasts sharply with the case of (non-inverse) special monoids, where Malheiro showed that all idempotents lie in the $\GreenD$-class of $1$, from which it follows that all maximal subgroups are isomorphic.
\end{abstract}

\section{Introduction}\label{sec_intro}
This paper is concerned with inverse monoids that admit presentations in which each defining relation has the form $w = 1$. The study of these monoids, which are termed  \emph{special inverse monoids} in the literature, is motivated both intrinsically by a beautiful geometric theory, and extrinsically by connections to other areas of semigroup theory and geometric group theory, including most notably possible applications to the \textit{one-relator word problem} (the question of whether  the word problem for one-relator monoids is decidable). The one-relator word problem has been a natural open problem since the resolution of the corresponding problem for groups by Magnus in 1932; it has resisted extensive study (most notably by Adjan and co-authors) and is widely regarded as one of the hardest and most important open problems in semigroup theory.\footnote{
For the avoidance of confusion, we note that neither one-relator groups nor one-relator (special) inverse monoids are typically examples of one-relator monoids, since interpreting
a group or inverse monoid presentation as a monoid presentation will usually give a different monoid. Indeed, non-trivial free groups require relations to define them as monoids (in fact it can always be done with just a single relation \cite{Perrin1984}), while non-trivial free inverse monoids are not even finitely presented as monoids \cite{Schein1975}.  A notable exception is the \textit{bicyclic monoid}, which is given by the presentation $\langle p, q \mid pq = 1 \rangle$ as both a monoid and an
inverse monoid. In contrast, it transpires that one-relator groups \textit{are} examples of one-relator special inverse monoids. (This can be deduced from, for example,
 Lemma~\ref{lemma_removeidempotents} below).}

The study of special inverse monoids was initiated by Margolis, Meakin and Stephen \cite{MMS87}, motivated by successes in understanding
special (non-inverse) monoids, and the prospect of applying exciting geometric methods (such as the Scheiblich/Munn description of \textit{free inverse monoids} and
\textit{Stephen's folding procedure} - see below and the survey \cite{Meakin:2007zt} for details) which had recently been developed for the study of inverse monoids. They 
showed that Stephen's procedure specialises to give a particularly beautiful geometric theory of Sch\"utzenberger graphs in the case of special inverse monoids, and
could be used to prove analogues of some of the results about the special (non-inverse) case. The case of special \textit{one-relator} inverse monoids $\mathrm{Inv}\langle A \mid w=1 \rangle$ received particular attention, culminating in a celebrated proof of Ivanov, Margolis and Meakin \cite{Ivanov:2001kl} that the one-relator word problem (for general monoids) reduces to the word problem for certain one-relator \textit{special inverse} monoids. Since special inverse monoids have much more evident geometric structure than general monoids, this opened up the possibility that the kind of methods employed in geometric group theory could be used to understand the word problem for one-relator special inverse monoids, and
hence the original one-relator word problem for monoids. This motivated extensive further study of this area, with initial results including positive solutions to the word problem in certain important cases \cite{Hermiller:2010bs, Margolis:2005il, Meakin:2007zt}, but the first author \cite{Gray2020} eventually showing that, surprisingly, the word problem for special one-relator inverse monoids in general is undecidable. However, the case shown undecidable is not one arising from the reduction
of the one-relator word problem for monoids, so the latter remains open and a better understanding of special inverse monoids may yet produce a solution.

In order to progress further, it seems that a deeper and more systematic understanding of special inverse monoids is required. In particular, it is necessary to
understand the subgroup structure of such monoids. 
Associated with every idempotent $e$ of a monoid is a group $H_e$ called the \textit{group $\GreenH$-class} of $e$, which is the largest subgroup of the monoid (with respect to containment) containing $e$. For this reason, these are also called the \emph{maximal subgroups} of the monoid, and every subgroup of the monoid is contained in a group $\GreenH$-class. The group $\GreenH$-class of the identity element is just the group of units of the monoid and the group $\GreenH$-classes of distinct idempotents are disjoint. Hence the main task in investigating the subgroup structure of a monoid is to understand its group $\GreenH$-classes.    

An obvious question is to what extent the subgroup structure resembles that found in the well-established theory of special (non-inverse) monoids, where it is known that all group $\GreenH$-classes are isomorphic to the group of units \cite{Malheiro2005} and that one-relator examples have group of units (and
therefore group $\GreenH$-classes) which are one-relator groups. (This fact has fact played a key role in Adjan's resolution of some cases of the one-relator word problem \cite{Adjan:1966bh}.) The first author and Ru\v{s}kuc \cite{GrayRuskucUnits} recently investigated 
the (left, right and two-sided) units, showing that in general the behaviour is very different from the non-inverse case: unlike in the non-inverse case, there is a one-relator special inverse monoid whose group of units is not a one-relator group, and there is a finitely presented special inverse monoid whose group of units is not finitely presented. 
One aim of this paper, realised in Section~\ref{sec_GroupsOfUnits} below, is to answer a question they posed \cite[Question 8.6]{GrayRuskucUnits} by giving a complete characterisation of the possible groups of units in finitely presented special inverse monoids: it transpires that they are exactly the finitely generated, recursively presented groups (Theorem~\ref{thm_possibleunits}).

The other main aim of this paper, in Section~\ref{sec_GroupHClasses}, is to initiate the study of group $\GreenH$-classes more generally. In sharp contrast to the case of special (non-inverse) monoids, and contrary to our own prior expectations and we believe to those of most experts in the field, we show that these can differ wildly from the group of units. We
introduce a powerful construction (Theorem~\ref{thm_stabiliser}) which allows us to exactly characterise the possible group $\GreenH$-classes in finitely presented
special inverse monoids: these are exactly the (not necessarily finitely generated) recursively presented groups (Corollary~\ref{cor_possiblemaxsubgroups}). The
same construction, combined with an old result of Higman \cite[Theorem~7.3]{Lyndon:2001lh}, also allows us to produce an example of a single finitely presented special inverse monoid in which every finite group arises as an $\GreenH$-class (Corollary~\ref{cor_everyfinitegroup}).

The main theme of this paper is that the subgroups of special inverse monoids are potentially far wilder, and the structure of these monoids therefore more complex, than expected, but this does not mean there is no hope of understanding them. Indeed, in order to establish these wild examples we develop a new geometric approach to maximal subgroups, exploiting the fact  \cite[Theorem~3.5]{Stephen:1990ss} that they are isomorphic to the automorphism groups of the \textit{Sch\"utzenberger graphs} of the monoid. This approach, which contrasts with the more algebraic/combinatorial approach of \cite{Gray2020} and \cite{GrayRuskucUnits}, also offers new ways to obtain a positive understanding of group $\GreenH$-classes in particular cases and important sub-classes, which we will develop in future work.

\section{Preliminaries}\label{sec_preliminaries}
In this section we fix notation and briefly recall some relevant background material from the geometric and combinatorial study of inverse monoids. 
For additional background we refer the reader to  
\cite{Meakin:2007zt} for combinatorial inverse semigroup theory and  
\cite{Lyndon:2001lh} for combinatorial group theory.

\subsection*{Graphs}
Throughout this paper we use the word \emph{graph} to mean a (possibly infinite) directed graph, possibly with loops and multiple edges, in which each edge is labelled by a symbol from some alphabet. The graph is called \emph{bi-deterministic} if no two edges with the same label share a start vertex or an end vertex.
 A \emph{path} in a graph is a sequence of edges $e_1, \ldots, e_n$ such that the end vertex of $e_{i}$ coincides with the start vertex of $e_{i+1}$ for $1 \leq i \leq n-1$. The
path is called \emph{closed} if its start vertex (the start vertex of $e_1$) coincides with its end vertex (the end vertex of $e_n$). The \emph{label} of a path is the word which is the concatenation in order of the labels of the edges. A path is called \emph{simple} if 
no two edges start at the same vertex or end at the same vertex.
A graph $\Gamma$ is a \emph{subgraph} of a graph $\Omega$ if the vertex and edge sets of $\Gamma$ are subsets of the vertex and edge sets respectively of $\Omega$; it is said to be an \emph{induced subgraph} if
in addition it contains every edge of $\Omega$ whose start and end are vertices of $\Gamma$.
A \emph{morphism} $\phi: \Gamma \rightarrow \Delta$ of graphs consists of a map from the vertex set of $\Gamma$ to the vertex set of $\Delta$ and a map from
the edge set of $\Gamma$ to the edge set of $\Delta$ which for each edge preserves the label and respects the start and end vertices.

\subsection*{Inverse monoids and Sch\"utzenberger graphs}
An \emph{inverse monoid} $M$ is a monoid such that for every $m \in M$ there is a unique element $m^{-1} \in M$, called the \emph{inverse} of $m$, that satisfies $mm^{-1}m=m$ and $m^{-1} m m^{-1} = m^{-1}$.  
The map $m \mapsto m^{-1}$ satisfies $(m^{-1})^{-1} = m$ and $(mn)^{-1} = n^{-1} m^{-1}$. If $A$ is a subset of $M$ we
write $A^{-1}$ for the set of inverses of elements in $A$.
For brevity, we shall also sometimes use the notation $m'$ for the inverse of $m$, especially when working with inverse monoid presentations.
We say an inverse monoid is \emph{generated} by a subset $A$ if is generated by $A$ under the multiplication and inversion operations, or equivalently, generated
by $A \cup A^{-1}$ under multiplication alone.
If $M$ is an inverse monoid generated by a set $A$ then the \emph{Cayley graph} of $M$ with respect to $A$ has vertex set $M$ and a directed edge from $m$ to $mx$ labelled by $x$ for all $m \in M$ and $x \in A \cup A^{-1}$. 
The \emph{Sch\"utzenberger graphs} of $M$ with respect to the generating set $A$ are the strongly connected components of the Cayley graph where two vertices $u, v$ belong to the same strongly connected component if there is a path from $u$ to $v$, and a path from $v$ back to $u$. 
So the Sch\"utzenberger graph $S\Gamma(m)$ of $m \in M$ is the subgraph of the Cayley graph induced on the set of vertices in the strongly connected component containing $m$.  Note that if $M$ is a group then it has just one Sch\"utzenberger graph, which is the entire Cayley graph.

Recall that in an inverse monoid $M$ two elements $m$ and $n$ are $\GreenR$-related if and only if they generate the same principal right ideal which is equivalent to saying that $mm^{-1} = nn^{-1}$.
Dually $m$ and $n$ are $\GreenL$-related if $m^{-1}m = n^{-1}n$ and are $\GreenH$-related if they are both $\GreenR$- and $\GreenL$-related.   
If $m \in M$ we write $S \Gamma(m)$ for the Sch\"utzenberger graph of $m$, and $H_m$, $R_m$ etc for the equivalence classes
of $M$ under Green's relations. Note that the vertex set of $S \Gamma(m)$ is exactly $R_m$. We define the \textit{root} of the graph $S \Gamma(m)$ to be the vertex $m m^{-1}$, that is, the unique vertex corresponding to an idempotent element of $M$.
 If $w \in (A^{\pm 1})^*$ is a word over the generating set and its inverses viewed as a formal alphabet then we may use $w$ in place of $m$ in the above notation, to be interpreted as the element
of $M$ represented by $w$.
It may be shown that if $s \GreenR t$ and $su=t$ then $tu^{-1}=s$.      
It follows that if $mx \GreenR x$ where $m \in M$ and $x \in A^{\pm 1}$ then $mxx^{-1} =m$ and hence within the Sch\"utzenberger graphs edges come in inverse pairs. 
For simplicity we will often draw and speak of Sch\"utzenberger graphs of an $A$-generated inverse monoid as only having edges labelled by letters from $A$, leaving it implicit that there are always edges labelled by their inverses in the reverse direction; we shall sometimes speak of traversing edges ``backwards'', by which we formally mean traversing the corresponding inverse edge.

\subsection*{Inverse monoid presentations and the maximal group image}
Let $A$ be a (not necessarily finite) alphabet. The \textit{free inverse monoid} $\mathrm{Inv} \langle  A \rangle$ is the unique (up to isomorphism) inverse monoid generated by $A$ with the property that every map from $A$ to an inverse monoid extends to a morphism from $\mathrm{Inv} \langle  A \rangle$. More concretely, if we let $A^{-1}$ be a
set of formal inverses for the generators in $A$ and write $A^{\pm 1}$ for $A \cup A^{-1}$, it is the quotient of the free monoid $(A^{\pm 1})^*$ by the \textit{Wagner congruence}, which is the congruence generated by the relations $ww^{-1}w = w$ and $uu^{-1}ww^{-1} = ww^{-1}uu^{-1}$ where $u, w \in (A^{\pm 1})^*$ and where by definition $(a_1^{\epsilon_1} \ldots a_k^{\epsilon_k})^{-1} = a_k^{-\epsilon_k} \ldots a_1^{-\epsilon_1}$. 
 An elegant geometric description for the free inverse monoid was given by Munn \cite{Munn74}, based on earlier work
of Scheiblich \cite{Scheiblich73}; we shall not make explicit use of this, but the ideas they developed are central to geometric inverse semigroup theory and implicitly
used in much of what we do. A word $w$ over $A^{\pm 1}$ is called a \textit{fundamental idempotent} if it represents an idempotent element of the free inverse monoid on $A$. This is equivalent
to saying that $w$ represents an idempotent element in every $A$-generated inverse monoid, and also to saying that $w$ represents the identity in the free group on $A$.

The inverse monoid defined by the presentation  
$\mathrm{Inv}\langle A \mid R \rangle$, 
where $R \subseteq (A^{\pm 1})^* \times (A^{\pm 1})^* $
 is the quotient of the free inverse monoid $\mathrm{Inv} \langle  A \rangle$ by the congruence generated by 
$R$.
An inverse monoid presentation is called \textit{special} if all relations have the form $w=1$, and an inverse monoid is called special if it admits a
special inverse monoid presentation. A well-known fact about special inverse monoid presentations is the following result, which essentially says that fundamental idempotent relators can be incorporated into other relators.
\begin{lemma}[see for example \cite{Gray2020}, Lemma 3.3]\label{lemma_removeidempotents}
Let $A$ be an alphabet and $e, r_1, \dots, r_m \in (A^{\pm 1})^*$ with $e$ a fundamental idempotent. Then
$$\mathrm{Inv}\langle A \mid e = r_1 = r_2 \dots = r_m = 1 \rangle \ = \ \mathrm{Inv} \langle A \mid e r_1 = r_2 = \dots = r_m = 1 \rangle.$$
\end{lemma}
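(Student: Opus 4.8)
The plan is to show that the two presentations define literally the same congruence on the free inverse monoid $\mathrm{Inv}\langle A\rangle$, and hence the same inverse monoid. Write $\rho_1$ for the congruence on $\mathrm{Inv}\langle A\rangle$ generated by the pairs $(e,1),(r_1,1),(r_2,1),\dots,(r_m,1)$ and $\rho_2$ for the congruence generated by $(er_1,1),(r_2,1),\dots,(r_m,1)$. It suffices to prove $\rho_1=\rho_2$, since then the two quotients coincide (equivalently, the identity map on $A$ induces an isomorphism between them).

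The inclusion $\rho_2\subseteq\rho_1$ is immediate: the pairs $(r_2,1),\dots,(r_m,1)$ are among the generators of $\rho_1$, and since $e\mathrel{\rho_1}1$ and $r_1\mathrel{\rho_1}1$ we get $er_1\mathrel{\rho_1}1\cdot 1=1$; thus every generating pair of $\rho_2$ lies in $\rho_1$.

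For the reverse inclusion I would pass to the quotient $M_2=\mathrm{Inv}\langle A\mid er_1=r_2=\dots=r_m=1\rangle=\mathrm{Inv}\langle A\rangle/\rho_2$ and argue there. The key input is that $e$ is a fundamental idempotent, so (as recalled in the preliminaries) its image in any $A$-generated inverse monoid — in particular in $M_2$ — is an idempotent. In $M_2$ the relation $er_1=1$ holds, so this idempotent has $r_1$ as a right inverse; but in any monoid an idempotent $f$ with $fg=1$ satisfies $f=f\cdot 1=f(fg)=(ff)g=fg=1$. Hence $e=1$ in $M_2$, and then also $r_1=1\cdot r_1=er_1=1$ in $M_2$. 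Therefore all the defining relations $e=1,r_1=1,r_2=1,\dots,r_m=1$ of the first presentation hold in $M_2$, which means every generating pair of $\rho_1$ lies in $\rho_2$, i.e.\ $\rho_1\subseteq\rho_2$.

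Combining the two inclusions gives $\rho_1=\rho_2$, completing the proof. The only non-formal ingredient is the fact — already available from the preliminaries — that a fundamental idempotent is sent to an idempotent in every $A$-generated inverse monoid; everything else is routine congruence bookkeeping together with the elementary observation that an idempotent possessing a one-sided inverse must equal the identity. Consequently I do not anticipate any genuine obstacle, though one should be mildly careful to argue via the quotient $M_2$ (where $e$ is known to be idempotent) rather than trying to manipulate words in $\mathrm{Inv}\langle A\rangle$ directly.
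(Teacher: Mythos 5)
Your proof is correct and is essentially the standard argument (the paper itself only cites \cite{Gray2020}, Lemma 3.3, whose proof runs the same way): one shows the relations of each presentation hold in the other, the only non-trivial point being that in the second monoid $e$ is an idempotent with a right inverse and hence equals $1$, forcing $r_1=1$. No issues.
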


We use $\mathrm{Gp}\langle A \mid R \rangle$ to denote the group defined by the presentation with generators $A$ and defining relators $R$, and we use $\mathrm{Gp}\langle X \rangle$ to denote the free group on the set $X$. 
Let $M = \mathrm{Inv}\langle A \mid R \rangle$ and let $G = \mathrm{Gp}\langle A \mid R \rangle$ be its \emph{maximal group image} and let $\sigma:M \rightarrow G$ denote the canonical surjective homomorphism from $M$ to $G$. Let $\Gamma$ be the Cayley graph of $G$ with respect to $A$. This defines a map $\overline{\sigma}$ from the disjoint union of the Sch\"utzenberger graphs of $M$ to the Cayley graph $\Gamma$ of $G$, which maps vertices using $\sigma$ and each edge $m \xrightarrow{a} ma$ in a      
Sch\"utzenberger graph maps to the unique edge in $\Gamma$ from $\sigma(m)$ to $\sigma(ma)$ labelled by $a$.     
This map $\overline{\sigma}$ is clearly a morphism of labelled graphs.  
The inverse monoid $M$ is called \textit{$E$-unitary} if $\sigma^{-1}(1_G)$ is equal to the set of idempotents of $M$. It is known (see \cite[Lemma~1.8]{Stephen93}) that this is equivalent to the map $\overline{\sigma}$ being injective on every Sch\"utzenberger graph of $M$.

\subsection*{Stephen's procedure}

We now describe a procedure due to Stephen \cite{Stephen:1990ss} for iteratively approximating Sch\"utzenberger graphs of inverse monoids. 
This procedure is the graphical version of the Todd--Coxeter coset enumeration procedure from group theory generalised to inverse semigroup theory.
Since we will be working only with special inverse monoids in this paper we will only describe the procedure in this case, even though it applies more generally.

Let $M = \mathrm{Inv}\langle A \mid R \rangle$ be a special inverse monoid presentation and let $\Gamma$ be a graph labelled over $A \cup A^{-1}$ such that the edges in $\Gamma$ occur in inverse pairs.     
We define two operations:
\begin{enumerate}
\item $P$-expansion: For every vertex $v \in V(\Gamma)$ and every $r \in R$, if there is no closed path at $v$ labelled by $R$ then we attach a simple closed path at $v$ labelled by $r$ such that all the internal vertices of this closed path are disjoint from $\Gamma$. (For every new edge we add we also add the corresponding inverse edge so that all the edges still occur in inverse pairs.)            
\item Edge folding: if there are edges $e$ and $f$ with the same label and the same start or end vertex then we identify these edges (which also identifies their start vertices and identifies their end vertices).   
  \end{enumerate}
It follows from \cite{Stephen:1990ss} that starting with any such graph $\Gamma$ and special inverse monoid presentation, the set of all graphs obtained by applying successive $P$-expansions and edge foldings forms a directed system in the category of $(A \cup A^{-1})$-labelled graphs. We denote the limit of this system by $\mathrm{Exp}(\Gamma)$. 
Given any word $w \in (A^{\pm 1})^*$ we use $L_w$ to denote the straight line graph labelled by the word $w$. 
So $L_w$ has vertex set $\{0, 1, \ldots, |w|\}$, one pair of inverse edges between $i$ and $i+1$ for all $0 \leq i \leq |w|-1$, and the label of the unique path of length $|w|$ from the vertex $0$ to the vertex $|w|$ is equal to the word $w$.         
The following is then a theorem of Stephen \cite{Stephen:1990ss} specialised to the case of special inverse monoids.

\begin{theorem}\label{thm_StephenThm}
Let $M = \mathrm{Inv}\langle A \mid R \rangle$ be a special inverse monoid and let 
$w \in (A^{\pm 1})^*$. Then the Sch\"utzenberger graph $S\Gamma(w)$ is isomorphic to the graph $\mathrm{Exp}(L_w)$ obtained by Stephen's procedure starting with the straight line graph $L_w$ labelled by $w$.      
  \end{theorem}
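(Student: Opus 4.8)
The plan is to re-derive this theorem of Stephen in the special case needed here. Write $\alpha$ and $\omega$ for the images in $\mathrm{Exp}(L_w)$ of the vertices $0$ and $|w|$ of $L_w$, so that $w$ labels a path from $\alpha$ to $\omega$. The first step is to construct a morphism of labelled graphs $\phi\colon \mathrm{Exp}(L_w)\to S\Gamma(w)$ with $\alpha\mapsto ww^{-1}$ and $\omega\mapsto w$, built as a compatible family of morphisms from the approximating graphs. One starts from the obvious morphism $L_w\to S\Gamma(w)$ reading $w$ from the root $ww^{-1}$; here each prefix $p$ of $w$ does give a genuine vertex, since writing $w=ps$ one has $w\in (ww^{-1}p)M$ and $ww^{-1}p\in wM$, so $ww^{-1}p\mathrel{\GreenR}w$. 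One then checks that the two operations of Stephen's procedure lift. For a $P$-expansion attaching a closed path labelled by a relator $r$ at a vertex $v$, one uses that $r=1$ in $M$: then $r$ already labels a closed path at $\phi(v)$ through the vertices $\phi(v),\phi(v)r_1,\phi(v)r_1r_2,\dots$, all of which lie in $R_{\phi(v)}=R_w$ because for each prefix $p$ of $r$ one has $\phi(v)\in(\phi(v)p)M$ (as $p$ times the remaining suffix equals $r=1$) and $\phi(v)p\in\phi(v)M$, so $\phi(v)\mathrel{\GreenR}\phi(v)p$. For an edge folding one uses that $S\Gamma(w)$ is bi-deterministic: two coterminal edges with the same label $x$ into a vertex $n=mx=m'x$ with $m,m'\in R_w$ force $m=m'$, since $m\mathrel{\GreenR}mx$ together with the quoted identity ($s\mathrel{\GreenR}t$, $su=t$ $\Rightarrow$ $tu^{-1}=s$) gives $mxx^{-1}=m$ and likewise $m'xx^{-1}=m'$. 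So foldings collapse edges with equal image, and passing to the colimit yields $\phi$.

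The second step is injectivity of $\phi$. The key is the following lemma: in any bi-deterministic graph that is closed under $P$-expansion (such as $\mathrm{Exp}(L_w)$ itself), whether a word $u$ labels a path from a given vertex $v$, and where that path then ends, depends only on the image of $u$ in $M$. This is proved by induction on the length of a derivation of an equality $u=u'$ in $M$, reduced to a single elementary Wagner move or relator substitution: bi-determinism handles $uu^{-1}u\leftrightarrow u$ and $uu^{-1}vv^{-1}\leftrightarrow vv^{-1}uu^{-1}$, because once one side of the relevant sub-path can be traversed so can the other (traverse forwards and reverse edges), while closure under $P$-expansion handles $r\leftrightarrow 1$ for $r\in R$. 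Granting this, suppose $\alpha u_1$ and $\alpha u_2$ are vertices of $\mathrm{Exp}(L_w)$ with $\phi(\alpha u_1)=\phi(\alpha u_2)$; then $ww^{-1}u_1=ww^{-1}u_2$ in $M$, and since reading $ww^{-1}$ from $\alpha$ returns to $\alpha$ (traverse the $w$-path to $\omega$ and back), the lemma applied in $\mathrm{Exp}(L_w)$ gives $\alpha u_1=\alpha(ww^{-1}u_1)=\alpha(ww^{-1}u_2)=\alpha u_2$.

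The main obstacle is surjectivity of $\phi$, i.e.\ showing that $\mathrm{Exp}(L_w)$ is already ``saturated''. A vertex of $S\Gamma(w)$ is an element $m\in R_w$, and since $S\Gamma(w)$ is a strongly connected component of the Cayley graph of $M$, there is a directed path from $ww^{-1}$ to $m$ labelled by some word $u$; it then suffices to show $u$ is readable from $\alpha$ in $\mathrm{Exp}(L_w)$, for then $\phi(\alpha u)=ww^{-1}u=m$. Proving this readability is exactly where the real content of Stephen's theorem lies: one must argue that finitely many $P$-expansions and foldings of $L_w$ genuinely construct enough of the graph to realise each such $u$. Concretely, one can extract it from the word-problem half of Stephen's analysis (that $u'=w$ in $M$ iff $u'$ labels a path $\alpha\to\omega$ in $\mathrm{Exp}(L_w)$ and $w$ labels a path $\alpha\to\omega$ in $\mathrm{Exp}(L_{u'})$): writing $w=mt$ in $M$ with $m$ represented by a word $v$, so that $w=ww^{-1}vt$ in $M$, one tracks a derivation of this equality against the cofinal sequence of approximants of $L_w$, at each elementary step only needing that the approximant is sufficiently $P$-expanded and folded. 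Finally, a morphism between bi-deterministic graphs that is bijective on vertices is automatically an isomorphism, since each edge of the target is determined by its label and start vertex and hence is the image of a unique edge of the source; so $\phi$ is the required isomorphism $\mathrm{Exp}(L_w)\cong S\Gamma(w)$.
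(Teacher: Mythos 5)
First, a point of reference: the paper does not prove this statement at all --- it is quoted from Stephen \cite{Stephen:1990ss} --- so your reconstruction can only be measured against Stephen's original argument, which it broadly mirrors: build a morphism $\phi\colon\mathrm{Exp}(L_w)\to S\Gamma(w)$ through the directed system, and exploit the fact that in a bi-deterministic graph with edges in inverse pairs in which every relator closes up at every vertex, readability of a word from a fixed vertex (and the endpoint of the resulting path) is invariant under the defining congruence. Your Step 1 and your Step 2 lemma, including its proof by induction on elementary Wagner moves and relator substitutions, are correct, and that lemma is genuinely the heart of the theorem.

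The two places where your write-up has real holes are, somewhat ironically, both closable by that same Step 2 lemma. (a) Surjectivity is not ``where the real content lies'' once the lemma is available, and the sketch you give is circular as written, since ``the word-problem half of Stephen's analysis'' is part of what is being proved. The direct argument: if $m\in R_w$ is represented by a word $v$, then $vv^{-1}=mm^{-1}=ww^{-1}$ in $M$, so $ww^{-1}vv^{-1}w=w$ in $M$; since $w$ is readable from $\alpha$ and readability depends only on the image in $M$, the word $ww^{-1}vv^{-1}w$ is readable from $\alpha$, and its prefix $ww^{-1}v$ ends at a vertex whose image under $\phi$ is $ww^{-1}\cdot ww^{-1}v=ww^{-1}m=m$. (b) The closing assertion that a morphism of bi-deterministic graphs which is bijective on vertices is automatically an isomorphism is false in general: vertex-bijectivity plus determinism gives injectivity on edges, but says nothing about surjectivity on edges (a discrete graph maps vertex-bijectively onto one with edges). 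You must separately show that every edge $m\xrightarrow{a}ma$ of $S\Gamma(w)$ is hit. This again follows from the Step 2 lemma: writing $\phi(\alpha u)=m$ and $w=(ma)t'$ in $M$ (possible since $ma\mathrel{\GreenR}w$), one has $ww^{-1}uat'=w$ in $M$, so $ww^{-1}uat'$ is readable from $\alpha$; its prefix $ww^{-1}u$ ends at $\alpha u$ by determinism, whence an $a$-edge leaves $\alpha u$ and maps onto the given edge. With these two repairs the argument is complete and is essentially Stephen's.
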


\section{Geometry of Sch\"utzenberger Graphs}\label{sec_geometry}

In this section we establish some foundational results about the geometry of Sch\"utzenberger graphs in special inverse monoids, which will be used to establish
our main theorems below, and are also likely to be of wider use in the future development of the subject.

\subsection*{Extending automorphisms} 

We shall need the following technical result, which gives a sufficient condition for an automorphism of a subgraph of a Sch\"utzenberger graph to
extend to an automorphism of the containing graph.

\begin{lemma}\label{lem_StephenAutInvariant}
Let $M = \mathrm{Inv}\langle A \mid R \rangle$ be a special inverse monoid, 
let $e$ be a fundamental idempotent in $(A^{\pm 1})^*$, and let 
$\Omega$ be any connected subgraph of $S\Gamma(e)$
such that $\Omega$ contains the vertex $e$ of $S\Gamma(e)$ 
and the word $e$ can be read from the vertex $e$ entirely within $\Omega$. 
Then every automorphism of $\Omega$ extends uniquely to an automorphism of $S\Gamma(e)$.   
\end{lemma}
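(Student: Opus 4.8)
The natural approach is to use Stephen's procedure (Theorem~\ref{thm_StephenThm}) and Theorem~\ref{thm_StephenThm}'s underlying machinery: realise $S\Gamma(e)$ as $\mathrm{Exp}(L_e)$, but more usefully realise it as $\mathrm{Exp}(\Omega)$. Since $\Omega$ contains the vertex $e$ and one can read the word $e$ from that vertex within $\Omega$, there is a morphism $L_e \to \Omega$ sending the distinguished start vertex $0$ to $e$, and hence an induced morphism $\mathrm{Exp}(L_e) \to \mathrm{Exp}(\Omega)$; conversely $\Omega \subseteq S\Gamma(e) = \mathrm{Exp}(L_e)$ gives a morphism the other way. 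The key point is that $\mathrm{Exp}(\Omega) \cong \mathrm{Exp}(L_e) = S\Gamma(e)$: the operations of $P$-expansion and edge folding applied to $\Omega$, followed to their limit, reconstruct exactly the Sch\"utzenberger graph, because $\Omega$ already ``sees'' $e$ as a closed-path-readable word at $e$ and is a subgraph of the determinized, saturated object $S\Gamma(e)$. I would first state and prove this identification carefully as the technical heart of the argument.

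Given that, the extension of automorphisms is almost formal. Let $\phi$ be an automorphism of $\Omega$. Stephen's procedure is functorial in the appropriate sense: an isomorphism of starting graphs induces an isomorphism of the resulting directed systems, hence of the limits. So $\phi$ induces an automorphism $\widehat{\phi}$ of $\mathrm{Exp}(\Omega) \cong S\Gamma(e)$. Transporting along the fixed isomorphism $\mathrm{Exp}(\Omega)\cong S\Gamma(e)$, we get an automorphism of $S\Gamma(e)$; one needs to check it restricts to $\phi$ on $\Omega$, which follows because $\Omega$ embeds in $\mathrm{Exp}(\Omega)$ compatibly with the construction, and because $S\Gamma(e)$ is bi-deterministic (edges in a Sch\"utzenberger graph come in inverse pairs and the graph is a folded automaton), so the embedding of $\Omega$ is label-respecting and injective and $\widehat{\phi}$ agrees with $\phi$ on the image.

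For uniqueness, I would use bi-determinism together with connectedness: $S\Gamma(e)$ is connected and bi-deterministic, so any automorphism is determined by the image of a single vertex, and indeed by its action on any connected subgraph containing that vertex. Concretely, if $\psi_1,\psi_2$ are two automorphisms of $S\Gamma(e)$ agreeing on $\Omega$, then $\psi_1^{-1}\psi_2$ fixes $\Omega$ pointwise, in particular fixes the vertex $e$; since $S\Gamma(e)$ is connected and bi-deterministic, an automorphism fixing any vertex and reading along paths is forced to be the identity on the whole connected component (each new vertex is reached from a fixed one along an edge with a given label, whose target is unique by determinism). Hence $\psi_1=\psi_2$. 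This also shows the extended automorphism is well-defined independently of auxiliary choices.

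\textbf{Main obstacle.} The crux is proving $\mathrm{Exp}(\Omega) \cong S\Gamma(e)$, i.e.\ that running Stephen's procedure from $\Omega$ rather than from $L_e$ yields the same limit. One direction ($\Omega \hookrightarrow S\Gamma(e)$ and $S\Gamma(e)$ is already closed under the operations, so $\mathrm{Exp}(\Omega)$ maps into $S\Gamma(e)$) is easy; the other requires that $\mathrm{Exp}(\Omega)$ contains (an isomorphic copy of) all of $S\Gamma(e)$, which one gets from the morphism $\mathrm{Exp}(L_e)\to \mathrm{Exp}(\Omega)$ induced by $L_e\to\Omega$ together with the universal/confluence property of Stephen's procedure (the limit does not depend on the order of operations, and any label-preserving morphism between starting graphs that is compatible with the distinguished vertex induces one between limits). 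Care is needed to check these morphisms are mutually inverse, which again leans on bi-determinism of the limit graphs to upgrade morphisms to isomorphisms. I expect this to be a short but slightly delicate argument invoking the confluence of the Stephen rewriting system, which is exactly the content behind Theorem~\ref{thm_StephenThm}.
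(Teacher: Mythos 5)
Your proof is essentially correct, but it takes a genuinely different route from the one the paper actually writes out --- and, notably, it is precisely the alternative argument the authors themselves sketch in the remark immediately following their proof (``An alternative way of seeing why the previous lemma is true is via Stephen's procedure\dots''). The paper's own proof is algebraic: it takes an automorphism $\theta$ of $\Omega$ sending the vertex $e$ to $ew$, uses the readability of the word $e$ at both $e$ and $\theta^{\pm1}(e)$ to show that $ew=ewe$ is $\GreenR$- and $\GreenL$-related to $e$ and hence lies in the group $\GreenH$-class $H_e$, and then invokes Green's Lemma to conclude that left multiplication by $ew$ is an automorphism of $S\Gamma(e)$ restricting to $\theta$ on $\Omega$; uniqueness is as in your argument. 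The trade-off is exactly the one you identify as your ``main obstacle'': your route needs the functoriality/confluence properties of $\mathrm{Exp}$ (a morphism of starting graphs induces a morphism of closures, $\mathrm{Exp}$ of a closed graph is itself, and morphisms between connected deterministic rooted graphs are determined by the image of one vertex), none of which is formally stated in the paper's preliminaries --- Theorem~\ref{thm_StephenThm} is only quoted for linear graphs $L_w$ --- so you would have to develop that machinery or cite it precisely from Stephen's paper. The Green's Lemma argument is self-contained given the paper's setup and produces the extension explicitly as a translation. Your approach is arguably more conceptual and would generalise to other invariance statements about $\mathrm{Exp}$, but as written it defers the real content to the unproved identification $\mathrm{Exp}(\Omega)\cong S\Gamma(e)$; to be a complete proof you would need to supply that lemma (your own sketch of it --- the two induced morphisms composing to root-fixing endomorphisms of connected deterministic graphs, hence identities --- is the right argument).
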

\begin{proof}
Consider an automorphism $\theta$ of $\Omega$. Suppose that it maps the vertex $e$ to $ew$ where $w$ is a word labelling a path in $\Omega$ from $e$ to its image vertex under $\theta$. 
Such a path exists since $\Omega$ is connected.   
Now since the word $e$ can be read from the vertex $e$ of $\Omega$ and there is an automorphism of $\Omega$ sending $e$ to $ew$ it follows that $e$ can also be read from $ew$. Hence, it follows that $ewe$ is $\GreenR$-related to $e$, and the path labelled by the word $ewe$ can be read within $\Omega$ starting at the vertex $e$.           
The inverse of the automorphism $\theta$ maps $ew$ to $e$ and must map $e$ to $ew^{-1}$. 
Indeed, since there is a path in $\Omega$ labelled by $w$ from $e$ to $ew$ it follows that there is a path labelled by $w$ from $\theta^{-1}(e)$ to $\theta^{-1}(ew)=e$ and hence $\theta^{-1}(e) = ew^{-1}$. 
Since there is an automorphism of $\Omega$ sending $e$ to $ew^{-1}$, and since the word $e$ can be read within $\Omega$ at $e$, it follows that the word $e$ can be read within $\Omega$ starting at $ew^{-1}$. In particular $ew^{-1}e$ is $\GreenR$-related to $e$. 
But if $ew^{-1}e$ is $\GreenR$-related to $e$ then its inverse $ewe$ is $\GreenL$-related to $e$, and we conclude that $ewe$ is $\GreenH$-related to $e$. 
Also, since $e$ labels a closed path in the graph $\Omega$ starting and ending at the vertex $e$, and $\theta$ is an automorphism of $\Omega$ sending $e$ to $ew$ it follows that $e$ also labels a closed path in $\Omega$ starting and ending at the vertex $ew$. Hence $ew = ewe$ is $\GreenH$-related to $e$, in other
words it belongs to the group $\GreenH$-class $H_e$. 
Now for any vertex in $\Omega$ since $\Omega$ is connected we can choose a word $u$ over $A \cup A^{-1}$ labelling a path from the vertex $e$ of $\Omega$ to that vertex. Since $\theta$ maps $e$ to $ew$ it follows that the vertex at the end of the path from $e$ labelled by $u$ must map to $ewu$. So the automorphism $\theta$ is given by the map $eu \mapsto ewu$. But $ew=ewe$ so this is the map $eu \mapsto (ew)eu$ where $ew \in H_e$. It is then a consequence of Green's Lemma \cite[Lemma~2.2.2]{Howie95} that left multiplication by $ew$ defines a bijection from $R_e$ to itself which induces an automorphism of the Sch\"utzenberger graph $S\Gamma(e)$. 
Hence the automorphism $\theta$ of $\Omega$ extends uniquely to an automorphism of $S\Gamma(e)$. Since $\theta$ was an arbitrary automorphism of $\Omega$ this completes the proof.      
\end{proof}
An alternative way of seeing why the previous lemma is true is via Stephen's procedure: 
since the word $e$ can be read inside $\Omega$ from the vertex $e$ of $S\Gamma(e)$, and it is a connected subgraph of $S\Gamma(e)$, it follows that the Sch\"utzenberger graph $S\Gamma(e)$ can be constructed by starting from $\Omega$ and using Stephen's procedure. 
Since Stephen's procedure is automorphism-invariant, it follows that every automorphism of $\Omega$ will extend (uniquely, since an automorphism of an inverse automaton is uniquely determined by what it is does to any single vertex) to an automorphism of $S\Gamma(e)$.

\subsection*{$E$-unitary special inverse monoids}

The following result, part of which is due directly to Stephen \cite{Stephen93} and part of which we deduce from his work, shows that the Sch\"utzenberger graphs of an $E$-unitary special inverse monoid are isomorphic to certain induced subgraphs of the Cayley graph of the maximal group image.   

\begin{lemma}\label{lem_fullSubgraph} 
Let $M = \mathrm{Inv}\langle A \mid R \rangle$ be an $E$-unitary special inverse monoid and let $w \in (A^{\pm 1})^*$, and let $\Gamma$ be the Cayley graph of the maximal group image  
 $G = \mathrm{Gp}\langle A \mid R \rangle$ with respect to $A$. 
Then the Sch\"utzenberger graph $S\Gamma(w)$ is embedded into the Cayley graph $\Gamma$ by $\overline{\sigma}$ as an induced subgraph. 
Moreover, the embedded copy of $S\Gamma(w)$ is the smallest subgraph of $\Gamma$ such that the word $w$ can be read from $1$ and every relator word
$r \in R$ can be read at every vertex of $S\Gamma(w)$.
\end{lemma}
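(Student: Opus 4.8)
The plan is to split the statement into three parts: (i) the image $\overline{\sigma}(S\Gamma(w))$ is a subgraph of $\Gamma$ in which $w$ can be read from $1$ and every relator $r\in R$ can be read at every vertex; (ii) $\overline{\sigma}$ is injective on $S\Gamma(w)$ and its image is an \emph{induced} subgraph of $\Gamma$; and (iii) minimality, i.e.\ any subgraph of $\Gamma$ with the ``closure'' properties from (i) already contains the embedded copy of $S\Gamma(w)$.

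For (i): since $M$ is $E$-unitary, by the cited result of Stephen \cite[Lemma~1.8]{Stephen93} the map $\overline{\sigma}$ is injective on each Sch\"utzenberger graph, so in particular it embeds $S\Gamma(w)$ into $\Gamma$. The word $w$ labels a path from the root $ww^{-1}$ of $S\Gamma(w)$, hmm — more carefully, in $S\Gamma(w)$ the vertex $ww^{-1}$ is the root and $w$ labels a path from $ww^{-1}$ to $w$; applying $\sigma$ and using $\sigma(ww^{-1})=1_G$ we get that $w$ can be read from $1$ in the image. For the relator property, recall that in any Sch\"utzenberger graph of $M=\mathrm{Inv}\langle A\mid R\rangle$ each $r\in R$ labels a closed path at every vertex: this is exactly the $P$-completeness of $S\Gamma(w)$ coming from Stephen's procedure (Theorem~\ref{thm_StephenThm}), since $\mathrm{Exp}(L_w)$ is closed under $P$-expansion. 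Pushing these closed paths forward under $\overline{\sigma}$ shows every $r\in R$ can be read at every vertex of the embedded copy.

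For (ii), the ``induced'' claim: suppose $g,h$ are vertices of the embedded copy of $S\Gamma(w)$ and there is an edge $g\xrightarrow{a}h$ in $\Gamma$, so $h=ga$ in $G$. Pull $g,h$ back to vertices $m,n\in R_w$ of $S\Gamma(w)$ with $\sigma(m)=g$, $\sigma(n)=h$. I want an $a$-edge from $m$ to $n$ in $S\Gamma(w)$; equivalently $ma\,\GreenR\,m$ and $ma=n$. The first point is that $ma\,\GreenR\,m$: this can be obtained by constructing $S\Gamma(w)$ via Stephen's procedure and observing that the subgraph is ``full'' with respect to $\Gamma$ — this is precisely the content of the part ``due directly to Stephen'' \cite{Stephen93}, and I would cite it, sketching the argument that since $a$ appears as a loop-completion along every relator, any edge of $\Gamma$ between two images is forced to already be present. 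Then $\sigma(ma)=ga=h=\sigma(n)$ and injectivity of $\overline{\sigma}$ on $S\Gamma(w)$ (the $E$-unitary hypothesis again) gives $ma=n$, so the edge is in $S\Gamma(w)$.

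For (iii), minimality: let $\Delta\subseteq\Gamma$ be any subgraph such that $w$ can be read from $1_G$ in $\Delta$ and every $r\in R$ can be read at every vertex of $\Delta$. Consider the connected component of $\Delta$ containing $1_G$; call it $\Delta_0$. Then $\Delta_0$ has edges in inverse pairs (reading a relator $r$ at $v$ and at the far end shows each traversed edge has its reverse, and a standard argument propagates this), $w$ reads from $1_G$ in $\Delta_0$, and $\Delta_0$ is closed under $P$-expansions. Hence running Stephen's procedure starting from $L_w$ can be realised inside $\Delta_0$: map $L_w$ into $\Delta_0$ via the $w$-path from $1_G$, and note every $P$-expansion needed is already available (a relator closed path at each vertex) and every required folding is automatic because $\Gamma$, being a group Cayley graph, is bi-deterministic. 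By Theorem~\ref{thm_StephenThm} the result is (an isomorphic copy of) $S\Gamma(w)$, sitting inside $\Delta_0\subseteq\Delta$, and it coincides with $\overline{\sigma}(S\Gamma(w))$ by uniqueness of the $w$-rooted embedding into $\Gamma$.

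\medskip
The main obstacle I anticipate is part (ii), the fact that the embedding is \emph{induced} (``full''): showing that $\Gamma$ cannot have extra edges between two vertices of the copy of $S\Gamma(w)$ that are not already edges of $S\Gamma(w)$. One has to use the $E$-unitary hypothesis in an essential way here — without it $\overline{\sigma}$ need not even be injective — and the cleanest route is to lean on the fullness statement already present in Stephen's work \cite{Stephen93}, rather than re-deriving it; the rest of the lemma is then bookkeeping with Stephen's procedure and the bi-determinism of group Cayley graphs.
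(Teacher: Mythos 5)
Your proof is correct and its overall skeleton matches the paper's: the induced-subgraph embedding is taken directly from Stephen (\cite[Lemma~3.5]{Stephen93}), and minimality is established by a double inclusion, with your part (i) supplying the inclusion $\Delta\subseteq\overline{\sigma}(S\Gamma(w))$ exactly as the paper does (the embedded copy satisfies the two closure properties, hence contains the smallest subgraph that does). Where you genuinely diverge is the reverse inclusion $\overline{\sigma}(S\Gamma(w))\subseteq\Delta$: the paper invokes a reachability fact from \cite{Stephen93} (every edge of $S\Gamma(w)$ lies on a path from the root labelled by a prefix of $w$ followed by prefixes of defining relators, so any subgraph of $\Gamma$ with the two closure properties must contain it), whereas you simulate Stephen's procedure inside the component $\Delta_0$ of $1$ and use determinism of the Cayley graph to see that every required folding is automatic, then identify the resulting image with $\overline{\sigma}(S\Gamma(w))$ by uniqueness of basepoint-preserving morphisms into a deterministic graph. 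Your route is more self-contained, using only Theorem~\ref{thm_StephenThm} and essentially re-deriving a special case of Lemma~\ref{lem_graphmorphisms}, at the cost of slightly more bookkeeping; the paper's is shorter but leans on an additional statement from Stephen's article. One caution: your sketched justification for the induced-subgraph claim (``$a$ appears as a loop-completion along every relator'') is not an argument --- a generator need not occur in any relator --- so you are right that this part must rest on the citation to Stephen, which is exactly what the paper does.
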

\begin{proof}
The fact the graph embeds 
as an induced subgraph
is \cite[Lemma~3.5]{Stephen93}. For the final part, let $\Delta$ be the smallest subgraph of $\Gamma$ in which word $w$ can be read from $1$ and every relator word $r \in R$ can be read at every vertex of $\Delta$.    
It follows from Stephen's procedure (Theorem~\ref{thm_StephenThm}) that, as a subgraph of $\Gamma$, the graph $S\Gamma(w)$ contains all the vertices and edges of $\Delta$.    
Conversely, it follows from the argument in \cite{Stephen93} preceding the statement of \cite[Lemma~3.1]{Stephen93} that for every edge $x$ in the Sch\"utzenberger graph $S\Gamma(w)$ there is a path in $S\Gamma(w)$ from the vertex $1$ labelled by a word of the form $v p_1 p_2 \ldots p_k$ where $v$ is a prefix of $w$ and each $p_i$ is a prefix of some defining relator $r \in R$, such that this path traverses the edge $x$ in some direction. It follows that $S\Gamma(w)$ is contained in  $\Delta$ and hence $S\Gamma(w) = \Delta$.  
\end{proof}

\subsection*{Morphisms between labelled digraphs and Sch\"utzenberger graphs} 
In several arguments we will construct a labelled digraph and then assert the existence of a morphism to this graph from a Sch\"utzenberger graph. The key lemma we need for this is the following. 

\begin{lemma}\label{lem_graphmorphisms}
Let $M = \mathrm{Inv}\langle A \mid R \rangle$ be a special inverse monoid, and let $\Omega$ be a bi-deterministic $A$-labelled graph in which for every vertex $v$ in $\Omega$ and every $r \in R$ there is a closed path in $\Omega$ at $v$ labelled by $r$. Then for every vertex $w$ of $\Omega$ there is a morphism from $S\Gamma(1)$ to $\Omega$ that sends the root of $S\Gamma(1)$ to $w$.    
  \end{lemma}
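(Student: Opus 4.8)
The plan is to use Stephen's procedure (Theorem~\ref{thm_StephenThm}) together with the universal/limit property of the construction $\mathrm{Exp}(-)$. Recall that $S\Gamma(1)$ is isomorphic to $\mathrm{Exp}(L_1)$, where $L_1$ is the trivial straight-line graph consisting of a single vertex (the root); so it suffices to build a morphism $\mathrm{Exp}(L_1) \to \Omega$. First I would observe that a single-vertex graph mapping to $\Omega$ is just a choice of vertex of $\Omega$, so I start with the morphism $\phi_0 \colon L_1 \to \Omega$ sending the unique vertex of $L_1$ to the prescribed vertex $w$. The idea is then to push this morphism through every stage of Stephen's procedure: if $\Gamma_i$ is a graph in the directed system used to build $\mathrm{Exp}(L_1)$ and $\phi_i \colon \Gamma_i \to \Omega$ is a morphism of $(A\cup A^{-1})$-labelled graphs, I want to produce a morphism $\phi_{i+1}$ on the next graph $\Gamma_{i+1}$ compatible with the inclusion $\Gamma_i \hookrightarrow \Gamma_{i+1}$, and then pass to the limit.

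The two things to check correspond to the two operations in the procedure. For a $P$-expansion at a vertex $v \in V(\Gamma_i)$ with relator $r \in R$: by hypothesis, $\Omega$ has a closed path at the vertex $\phi_i(v)$ labelled by $r$. A $P$-expansion attaches a simple closed path at $v$ labelled by $r$ with fresh internal vertices; I extend $\phi_i$ by sending this attached path (edge by edge) to the guaranteed closed path at $\phi_i(v)$ in $\Omega$, which is possible precisely because the new path is a simple arc whose internal vertices are new, so there are no coherence constraints to violate. (One subtlety: if $\Omega$ is bi-deterministic, the closed path at $\phi_i(v)$ labelled by $r$ is in fact \emph{unique}, so there is no choice to make; but uniqueness is not needed for existence of a morphism, only for its uniqueness.) For an edge folding: if $e$ and $f$ are edges of $\Gamma_i$ with the same label sharing a start (or end) vertex, then bi-determinism of $\Omega$ forces $\phi_i(e) = \phi_i(f)$ --- they have the same label and, since $\phi_i$ is a morphism, their shared endpoint maps to a common vertex, and in a bi-deterministic graph there is at most one edge with a given label out of (resp. into) a vertex. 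Hence $\phi_i$ factors through the quotient graph obtained by the folding, giving $\phi_{i+1}$. Finally, since $\mathrm{Exp}(L_1)$ is the colimit of the directed system $(\Gamma_i)$ and the $\phi_i$ are compatible with the connecting maps, they assemble into a morphism $\mathrm{Exp}(L_1) \to \Omega$; transporting along the isomorphism $S\Gamma(1) \cong \mathrm{Exp}(L_1)$ of Theorem~\ref{thm_StephenThm} gives the desired morphism sending the root to $w$.

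I expect the main obstacle to be purely bookkeeping rather than conceptual: making the induction along the directed system precise, in particular checking that the extensions $\phi_i \mapsto \phi_{i+1}$ are genuinely compatible with all the connecting morphisms of the system (so that the colimit universal property applies cleanly), and handling the case where infinitely many $P$-expansions and foldings are interleaved, so that one really does need to invoke the limit $\mathrm{Exp}(L_1)$ rather than a finite stage. A cleaner alternative, which I would mention, avoids the transfinite induction entirely: one could instead use the closure property characterising $S\Gamma(1)$ --- every vertex lies at the end of a path from the root labelled by a product $v p_1 \cdots p_k$ with $v$ a prefix of $1$ (so empty) and each $p_i$ a prefix of a relator (as in the proof of Lemma~\ref{lem_fullSubgraph}) --- to directly define the vertex map by reading such words in $\Omega$ from $w$, using the hypothesis that every relator is readable as a closed path at every vertex of $\Omega$ to see the definition is independent of the chosen path, and bi-determinism of $\Omega$ to see it is well-defined and extends to edges; but the Stephen's-procedure argument is the most transparent and robust, so that is the one I would write up in full.
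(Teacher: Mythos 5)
Your proof is correct and follows essentially the same route as the paper's: both run Stephen's procedure starting from the single-vertex graph $L_1$, use the hypothesis that every relator labels a closed path at every vertex of $\Omega$ to extend the morphism over $P$-expansions, and use bi-determinism of $\Omega$ to see that the morphism descends through edge foldings. The only difference is bookkeeping --- the paper performs all expansions first (building an unfolded tree of relator loops, with the vertex map given by reading products of relator prefixes from $w$, exactly as in your sketched alternative) and then checks descent under bi-determinisation in one step, whereas you interleave the two operations along the directed system.
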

\begin{proof} 
We need to define a map and then show that it is a well-defined morphism. 

Let $T$ be the infinite graph constructed iteratively from a single vertex by adding a loop labelled by $r$ at every vertex for every $r \in R$, but not performing any edge folding. 
We view each of these loops as oriented in such a way that the word $r$ is the label of the path given by reading the loop clockwise.    
By a \emph{proper subpath of a loop of $T$} we mean a path $\pi$ with initial vertex being the vertex at which the loop was attached in the construction of $T$, and $\pi$ is a simple path which traverses the loop clockwise but does not visit every vertex of the loop i.e. the end vertex of $\pi$ is not equal to the start vertex of $\pi$. Note that if $r \in R$ is the label of a loop in $T$ then any proper subpath of this loop is labelled by a proper prefix of the word $r$.         
From the construction it follows that every vertex $u$ of $T$ there is a unique sequence $(\pi_1, \pi_2, \ldots, \pi_k)$ where each $\pi_i$ is a proper subpath of a loop and $\pi_1 \pi_2 \ldots \pi_k$ is a path from the root of $T$ to $u$.      
We define a map from the vertex set of $T$ to vertices in $\Omega$ where the vertex $u$ with corresponding sequence $(\pi_1, \pi_2, \ldots, \pi_k)$ of proper subpaths of loops maps to the vertex in $\Omega$ obtained by following the path labelled by $p_1 \ldots p_k$ starting at the vertex $w$ of $\Omega$, where $p_i$ is the label of the path $\pi_i$ for $1 \leq i \leq k$.   
This gives a well-defined (by uniqueness of the sequences of proper subpaths of loops) map from the vertices of $T$ to the vertices of $\Omega$.   
As a consequence of the assumptions that $\Omega$ is bi-deterministic and in $\Omega$ every relator from $R$ can be read from every vertex in $\Omega$, this map extends uniquely to a morphism of graphs which maps edges of $T$ to the edges of $\Omega$. 
Let us use $\phi$ to denote this graph morphism from $T$ to $\Omega$.  

It follows from Stephen's procedure that $S\Gamma(1)$ is obtained by bi-determinising $T$. 
We claim that $\phi$ induces a well-defined graph morphism from $S\Gamma(1)$ to $\Omega$.       
To see this note that two vertices $v$ and $u$ of $T$ are identified in $S\Gamma(1)$ if and only if there is a path in $T$ between these vertices labelled by a word that freely reduces to the empty word in the free group. Since $\phi$ is a morphism it follows that there is a path in $\Omega$ between $\phi(v)$ and $\phi(u)$ labelled by the same word that freely reduces to the empty word in the free group. Since the graph $\Omega$ is bi-deterministic it follows that $\phi(v) = \phi(u)$. Hence $\phi$ induces a well-defined map from the vertices of $S\Gamma(1)$ to the vertices of $\Omega$.          
Two edges $e$ and $f$ of $T$ are identified in $S\Gamma(1)$ if and only if they have the same label, say $a \in A$, and their start vertices $v$ and $u$ are identified in $S\Gamma(1)$. 
But we have already seen that this means that $\phi(v) = \phi(u)$ which, since $\Omega$ is bi-deterministic means that both $e$ and $f$ must be mapped to the unique edge in $\Omega$ with start vertex $\phi(v) = \phi(u)$ and labelled by $a$.                
This shows that $\phi$ induces a well defined map from the edges of $S\Gamma(1)$ to the edges of $\Omega$.    

It remains to verify that $\phi$ induces a morphism of graphs from $S\Gamma(1)$ to $\Omega$.    
Let $e$ be an edge in $S\Gamma(1)$. Choose an edge $f$ in $T$ such that $f$ is equal to $e$ when $T$ is bi-determinised, that is, $f$ is a member of the equivalence class of edges that represented $e$. Since $\phi$ is a morphism from $T$ to $\Omega$ it follows that the start vertex of $f$ in $T$ maps to the start vertex of $\phi(f)$ in $\Omega$, and the end vertex of $f$ in $T$ maps to the end vertex of $\phi(f)$ in $\Omega$. 
But by definition $\phi(e) = \phi(f)$ and $\phi$ maps the start vertex $e$ to the same place as the start vertex of $f$, and similarly for the end vertices. 
It follows that $\phi$ induces a morphism of graphs from $S\Gamma(1)$ to $\Omega$. 
\end{proof}
 
Note that previous lemma is not true if one drops the condition that $\Omega$ is bi-deterministic.   
The following two results will be useful.

\begin{lemma}\cite[Corollary~3.2]{Gray2020}\label{lem_reducerightinvert}
Let $M = \mathrm{Inv}\langle A \mid R \rangle$ be an inverse monoid. 
If $xaa^{-1}y$ is right invertible where $a \in A \cup A^{-1}$ and $x, y \in (A \cup A^{-1})^*$ then $xaa^{-1}y = xy$ in $M$.     
  \end{lemma}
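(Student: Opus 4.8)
The plan is to reduce the statement to two elementary and well-known facts about inverse monoids. The first is that an element $s$ of an inverse monoid $M$ is right invertible if and only if $ss^{-1}=1$ (equivalently $s\,\GreenR\,1$): if $sz=1$ for some $z$, then $ss^{-1}=ss^{-1}(sz)=(ss^{-1}s)z=sz=1$, and the converse is immediate since then $s^{-1}$ is a right inverse. The second is that the natural partial order on $M$ is compatible with multiplication on both sides and has $1$ as its greatest element, together with the standard description $s\le t \iff s=ss^{-1}t$.

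First I would observe that $xaa^{-1}y\le xy$ in the natural partial order. Since $aa^{-1}$ is idempotent we have $aa^{-1}\le 1$ in $M$, and multiplying this inequality on the left by $x$ and on the right by $y$ (using compatibility of $\le$ with multiplication) yields $xaa^{-1}y\le x\,1\,y=xy$.

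Now suppose $s:=xaa^{-1}y$ is right invertible. By the first fact above, $ss^{-1}=1$. Writing $t:=xy$, the previous paragraph gives $s\le t$, hence $s=ss^{-1}t=1\cdot t=t$; that is, $xaa^{-1}y=xy$ in $M$, as required.

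I do not expect any real obstacle here: the only point requiring care is quoting the partial-order facts in exactly the right form (in particular, that right invertibility of $s$ is precisely the statement $ss^{-1}=1$, and the characterisation $s\le t\iff s=ss^{-1}t$). It is arguably cleanest to isolate the general principle first — in any inverse monoid, if $s\le t$ and $s$ is right invertible then $s=t$ — and then simply apply it to $s=xaa^{-1}y\le xy=t$; this is how I would organise the write-up, since the special role of $a$ being a generator, and of $x,y$ being words, is irrelevant.
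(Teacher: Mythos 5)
Your proof is correct. Note first that the paper does not actually prove this lemma: it is imported verbatim from \cite[Corollary~3.2]{Gray2020}, so there is no in-paper argument to compare against. Your route --- via the natural partial order, using that $aa^{-1}\le 1$ (hence $xaa^{-1}y\le xy$ by compatibility), that right invertibility of $s$ is equivalent to $ss^{-1}=1$, and the characterisation $s\le t\iff s=ss^{-1}t$ --- is a clean, self-contained derivation of exactly the stated fact; it is essentially the observation that right invertible elements are maximal in the natural partial order, which is also the idea underlying the cited source. One small inaccuracy to fix in the write-up: $1$ is \emph{not} the greatest element of the natural partial order on a general inverse monoid (e.g.\ in a nontrivial group the order is equality, so no non-identity element lies below $1$); it is merely a maximal element, and the elements below $1$ are precisely the idempotents. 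Fortunately you only use the instance $aa^{-1}\le 1$ for the idempotent $aa^{-1}$, which is valid, so the argument stands once that sentence is corrected.
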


\begin{lemma}\label{lemma_eunitarytransfer}
Suppose $M = \mathrm{Inv}\langle A \mid R \rangle$ is an $E$-unitary inverse monoid, and $T$ is a set of relators which hold in the maximal
group image $\mathrm{Gp}\langle A \mid R \rangle$. Then the inverse monoid $N = \mathrm{Inv}\langle A \mid R \cup T \rangle$ is $E$-unitary.
\end{lemma}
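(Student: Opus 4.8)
The plan is to exploit the fact that $N$ is a quotient of $M$ which has the \emph{same} maximal group image, and then simply unwind the definition of $E$-unitarity.

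First I would record two preliminary observations. Since each relator $t \in T$ represents the identity in $G := \mathrm{Gp}\langle A \mid R \rangle$, adjoining the relations $t = 1$ (for $t \in T$) to the group presentation changes nothing, so the maximal group image of $N = \mathrm{Inv}\langle A \mid R \cup T \rangle$ is again $G$. Write $\sigma_M \colon M \to G$ and $\sigma_N \colon N \to G$ for the two canonical surjections onto this common maximal group image. Secondly, because the inverse-monoid congruence defining $N$ contains the one defining $M$, there is a canonical surjective homomorphism $\pi \colon M \to N$; and since $\sigma_N \circ \pi$ and $\sigma_M$ are homomorphisms $M \to G$ that agree on the generators $A$, they coincide, i.e.\ $\sigma_N \circ \pi = \sigma_M$.

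Then I would check $E$-unitarity of $N$ straight from the definition $\sigma_N^{-1}(1_G) = E(N)$. The inclusion $E(N) \subseteq \sigma_N^{-1}(1_G)$ is automatic, since an idempotent has trivial image in any group. For the reverse inclusion, let $n \in N$ with $\sigma_N(n) = 1_G$, pick $m \in M$ with $\pi(m) = n$, and observe that $\sigma_M(m) = \sigma_N(\pi(m)) = \sigma_N(n) = 1_G$. As $M$ is $E$-unitary, $m$ is an idempotent of $M$, and therefore $n = \pi(m)$ is an idempotent of $N$. Hence $\sigma_N^{-1}(1_G) \subseteq E(N)$, and $N$ is $E$-unitary.

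I do not anticipate a serious obstacle here: the argument is essentially a one-line diagram chase once the two preliminary observations are in place. The only points that require a moment's care are that adjoining $T$ genuinely leaves the maximal group image unchanged (immediate from the hypothesis that the words in $T$ equal $1$ in $G$) and that $\sigma_N \circ \pi = \sigma_M$ (which holds because a homomorphism out of the $A$-generated inverse monoid $M$ is determined by its values on $A$). One could instead argue geometrically, building the Sch\"utzenberger graphs of $N$ inside the Cayley graph of $G$ via Stephen's procedure and Lemma~\ref{lem_fullSubgraph}, but the algebraic route above is shorter and does not even use that the presentations are special.
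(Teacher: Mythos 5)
Your argument is correct and is essentially the paper's own proof: the paper likewise takes a word representing the given element of $N$, notes it maps to $1$ in the (unchanged) maximal group image, invokes $E$-unitarity of $M$ to conclude the word represents an idempotent there, and pushes that idempotent forward along the quotient $M \to N$. Your phrasing via the quotient map $\pi$ and the identity $\sigma_N \circ \pi = \sigma_M$ is just a slightly more explicit packaging of the same diagram chase.
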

\begin{proof}
Suppose $s \in N$ is an element which maps to $1$ in the maximal group image.  Choose a word $w$ over $A^{\pm 1}$ representing $s$. Since the relations in $T$ already hold in the maximal group image of $M$, $w$ also represents $1$ in the maximal group image of $M$. Since $M$ is $E$-unitary this means $w$ represents an idempotent
in $M$, and hence also in $N$ which is a quotient of $M$.
\end{proof}

\section{Exact characterisation of groups of units}\label{sec_GroupsOfUnits}

It is known that the group of units of a finitely presented special inverse monoid is always finitely generated: this is implicit in the work of Ivanov, Margolis and Meakin \cite[Proposition 4.2]{Ivanov:2001kl}, and an explicit proof can be found as \cite[Theorem 1.3]{GrayRuskucUnits}. The first author and Ru\v{s}kuc \cite{GrayRuskucUnits} have recently shown that for every finitely generated subgroup $H$ of a finitely presented group $G$, there is a finitely presented special inverse monoid with group of units the free product $G * H$. Thus, every finitely generated subgroup of a finitely presented group (which by Higman's embedding theorem
\cite{Higman61} means every finitely generated recursively presented group), is a free factor of the group of units of a finitely presented special inverse monoid. They ask \cite[Question 8.6]{GrayRuskucUnits} the natural question of whether the ``free factor'' can be eliminated, in other words, whether every finitely
generated recursively presented group arises as the group of units of some finitely presented special inverse monoid. In this section we answer this question in
the positive, giving an exact characterisation of the possible groups of units of finitely presented special inverse monoids.

\begin{theorem}\label{thm_possibleunits}
The groups of units of finitely presented special inverse monoids are exactly the finitely generated, recursively presented groups (or equivalently, the
finitely generated subgroups of finitely presented groups).
\end{theorem}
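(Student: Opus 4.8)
The plan is to prove the two directions separately. The "easy" direction — that the group of units of a finitely presented special inverse monoid is always finitely generated and recursively presented — is the direction where half the work is already cited: finite generation is \cite[Theorem 1.3]{GrayRuskucUnits}, and recursive presentability follows because, given a finite presentation $\mathrm{Inv}\langle A \mid R \rangle$, one can recursively enumerate the set of words over $A^{\pm1}$ that represent units (e.g.\ by running Stephen's procedure on $S\Gamma(1)$ in parallel with enumerating two-sided inverse candidates, or more simply by noting that $w$ is a unit iff both $w$ and $w^{-1}$ are right invertible, and right invertibility is recognised by a recursive enumeration of the relevant folding process), hence the word problem relative to a finite generating set is recursively enumerable, which is exactly recursive presentability for a finitely generated group. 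So I would dispatch this direction with a short paragraph, citing the finite-generation result and sketching the enumeration.

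The substantive direction is the converse: given a finitely generated recursively presented group $H = \mathrm{Gp}\langle X \mid S\rangle$ with $X$ finite and $S$ recursively enumerable, produce a finitely presented special inverse monoid whose group of units is exactly $H$. The starting point is the Gray--Ru\v{s}kuc construction, which already realises $G * H$ as a group of units for a suitable finitely presented overgroup $G \geq H$; the task posed by Question~8.6, and what I would do here, is to "kill the free factor $G$". The natural strategy is to add more relators of the form $w = 1$ to the Gray--Ru\v{s}kuc presentation so that, in the maximal group image, everything in (a generating set of) $G$ that is not already forced into $H$ gets collapsed, while being careful that (i) the new relators are words that are invertible in the monoid so that they genuinely sit inside the group of units and can be manipulated via Lemma~\ref{lem_reducerightinvert}, and (ii) one does not accidentally collapse $H$ itself or enlarge it. Concretely I expect the argument to go: take the generators to be those of the ambient monoid construction; choose the extra special relators to be exactly the (suitably rewritten, and made "idempotent-padded" via Lemma~\ref{lemma_removeidempotents} if needed) words witnessing the relations that present $H$ as a quotient of (the relevant part of) $G$; verify via the geometry of $S\Gamma(1)$ — i.e.\ analysing which words become two-sided invertible after the folding forced by the new relators — that the group of units of the resulting monoid is a quotient of $G*H$ that is isomorphic to $H$; and check finite presentability is preserved, which it is since only finitely many generators are involved and we need only finitely many new relators provided $S$ was finite, with the recursively-enumerable case handled by a standard Higman-type trick of absorbing the (infinitely many, recursively enumerable) relators of $H$ into finitely many relators over an enlarged generator set before applying the construction.

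The main obstacle, and the place I would spend the most care, is controlling the group of units after adding relators. Adding a relator $w=1$ to a special inverse monoid presentation does \emph{not} simply quotient the group of units by $\langle\langle \sigma(w)\rangle\rangle$: it changes the whole monoid, can create new units, and can alter which words are invertible. So the heart of the proof is a precise lemma — presumably the "powerful construction" the introduction refers to as Theorem~\ref{thm_stabiliser}, specialised to the identity $\GreenH$-class — describing exactly how the group of units $H_1 = \mathrm{Aut}(S\Gamma(1))$ transforms. I would try to arrange the construction so that the new relators are read along paths in $S\Gamma(1)$ that are already present (using a variant of Lemma~\ref{lem_StephenAutInvariant}: if the relevant words can be read within the existing Sch\"utzenberger graph then the folding they induce is automorphism-compatible), so that the effect on $S\Gamma(1)$, and hence on the group of units, is exactly the desired quotient and nothing more. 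Getting this "no unexpected units, no unexpected collapse" bookkeeping right — most naturally by computing $S\Gamma(1)$ of the new monoid directly via Stephen's procedure and identifying its automorphism group with $H$ — is where the real content of the theorem lies; the rest is assembling known ingredients (Higman embedding, the Gray--Ru\v{s}kuc base construction, Lemmas~\ref{lemma_removeidempotents}, \ref{lem_reducerightinvert}, \ref{lemma_eunitarytransfer}).
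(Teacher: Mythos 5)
Your treatment of the forward direction (units are finitely generated and recursively presented) matches the paper's: finite generation is quoted from \cite{GrayRuskucUnits}, and recursive presentability follows by enumerating the relations that hold in the monoid. That part is fine.

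The converse direction has a genuine gap. You correctly identify the central difficulty --- that adding relators $w=1$ to a special inverse monoid presentation does not simply quotient the group of units, and that the whole problem is to rule out ``unexpected'' units --- but your proposal stops exactly there: you say you ``would try to arrange'' the new relators to be readable along existing paths and that the real content is the ``no unexpected units, no unexpected collapse'' bookkeeping, without supplying either the construction or the bookkeeping. Moreover the strategy you sketch (start from the Gray--Ru\v{s}kuc presentation whose group of units is $G*H$ and add relators killing the free factor $G$) is not what the paper does and is doubtful as stated: in that construction the copy of $H$ is only isomorphic to $H$, rather than to a free group on its generating letters or to some other quotient, because of an embedded copy of the Cayley graph of $G$ inside $S\Gamma(1)$; collapsing the generators of $G$ to $1$ destroys exactly that structure, and you give no reason why the surviving units should still form a copy of $H$. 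The paper instead builds a presentation from scratch: generators $A, p_0,\dots,p_k, z, d$ with relators $p_iap_i'p_ia'p_i'=1$, $p_ir_id'p_i'=1$, $p_0dp_0'=1$, $zbz'zb'z'=1$ and $z\bigl(\prod_i p_i'p_i\bigr)z'=1$, arranged so that every ``$p$-vertex'' of $S\Gamma(1)$ roots an embedded Cayley graph of $G$ and the units are exactly the elements $zwz'$ with $w\in B^*$, which form a copy of $H$. The hard half --- excluding extra units --- is then done in two concrete steps for which you have no analogue: (i) projecting onto an auxiliary monoid $N$ and then onto the bicyclic monoid to show that the only factorisations of relators into unit pieces are $(p_iap_i')(p_ia'p_i')$ and $(zbz')(zb'z')$, and (ii) constructing an explicit bi-deterministic model graph $\Omega$ out of ``zones'' and invoking Lemma~\ref{lem_graphmorphisms} to obtain a morphism $S\Gamma(1)\to\Omega$ witnessing that $p_iap_i'$ is not a unit. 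Note also that Theorem~\ref{thm_stabiliser}, which you guess might be the engine here, concerns group $\mathcal{H}$-classes of idempotents other than the identity and is not used in the paper's proof of this theorem.
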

\begin{proof}
The fact that groups of units in finitely presented special inverse monoids are finitely generated is \cite[Theorem 1.3]{GrayRuskucUnits} (and also implicit in \cite[Proposition 4.2]{Ivanov:2001kl}). The fact that they are recursively presented can easily be shown by using the special inverse monoid presentation to recursively enumerate all relations $w=1$ which hold in the special inverse monoid, and then using those where $w$ factorises as a product of chosen representatives of a generating set for
the group of units to enumerate relations which hold in the group. (Alternatively, it follows from Corollary~\ref{cor_possiblemaxsubgroups} below, which is proved
independently of Theorem~\ref{thm_possibleunits}.) 

For the converse, let $H$ be a finitely generated, recursively presented group. By the Higman embedding theorem \cite{Higman61} we may assume that $H$ is a (finitely generated, and hence recursively enumerable) subgroup of a finitely presented group $G$. Choose a finite special monoid presentation for $G$, say $G = \mathrm{Mon}\langle A \mid r_1, \dots, r_k \rangle$, supposing
without loss of generality that $H$ is generated as a monoid by some 
(possibly empty if $H$ is the trivial group)
subset $B$ of $A$, and that for each $a \in A$ there is a 
unique 
formal inverse $\overline{a} \in A$ (with
$\overline{\overline{a}} = a$) and $a \overline{a} = 1$ is among the defining relators for $G$. Let $p_0$, \dots, $p_k$, $z$ and $d$ be symbols not in $A$,
and consider the special inverse monoid
\begin{align*}
M = \mathrm{Inv}\langle \ A, \ p_0, \dots, p_k, \ z, \ d \ \mid \ &p_i a p_i' p_i a' p_i' = 1 \ \ \ &(a \in A, i = 0, \dots, k) \\
&p_i r_i d' p_i' = 1 &(i = 1, \dots, k) \\
&p_0 d p_0' = 1 \\
&zbz'zb'z' = 1 &(b \in B) \\
&z \left( \prod_{i=0}^k p_i' p_i \right) z' = 1 \ \rangle.
\end{align*}
noting that the order of the product in the final relator is unimportant because the factors are fundamental idempotents.

We note that the presentation does \textbf{not} automatically identify the formal inverse of a generator in $A$ with its inverse in the inverse monoid, so $a$, $\overline{a}$,
$a'$ and $\overline{a}'$ may be four distinct elements of $M$. However, recalling that the defining relators for $G$ include relators of the form $a \overline{a}$ and $\overline{a} a$, notice that at any vertex in a Sch\"utzenberger graph of $M$ with edges labelled by all the $p_i$s coming in, and for any generator $a \in A$, Stephen's procedure will 
as a consequence of the second and third families of relators in the presentation for $M$  
attach closed paths labelled $a\overline{a}d'$, $\overline{a}ad'$ and $d$ which bi-determinise to give closed paths labelled $a\overline{a}$ and $\overline{a}a$. 
Thus, for every vertex $u$ with edges labelled by all the $p_i$s coming in, and for any generator $a \in A$, there is an edge leaving $u$ labelled by $a$ to a vertex $v$ with a parallel reverse edge from $v$ back to $u$ labelled by $\overline{a}$.  

Note also that the group defined by this presentation, which is the maximal group image of $M$, is easily seen to be a free product of $G$ (generated by $A$) with a free group freely generated by $\lbrace p_0, \dots, p_k, z \rbrace$, with $d$ mapping to $1$.

Let $\Gamma$ be the Cayley graph of $G$ with respect to $A$. 
Note that from the assumptions on the presentation chosen for $G$ every edge in $\Gamma$ labelled by $a$ has a unique reverse edge labelled by $\overline{a}$. 

We call a vertex of $\sgo$ a \textit{$z$-vertex} if it has a $z$-edge coming in, and a \textit{$p$-vertex} if it has edges coming in labelled by $p_i$ for all $i = 0, \dots, k$. Notice that it follows from the final relation in the presentation that every $z$-vertex is a $p$-vertex. 

We claim that every $p$-vertex of $\sgo$ has an embedded copy of $\Gamma$ with its root at this vertex. Indeed, let $v$ be a $p$-vertex. It follows by the first type of relation in the presentation that $v$ has $a$-edges going out for all $a \in A$, and that these edges all lead to $p$-vertices. Moreover, by the above argument, $v$ has $a$-edges coming in for each $a \in A$, and each comes from the vertex to which the corresponding $\overline{a}$-edge going out leads. A simple inductive argument shows that all words over
$A^{\pm 1}$ can be read from $v$ staying always at $p$-vertices. Now for each $p$-vertex and each defining relator $r_i$ of $G$, there must be closed
paths at $v$ labelled $d$ and $r_id'$, which since $\sgo$ is a bi-deterministic graph means there must be a closed path at every $p$-vertex labelled by $r_i$. 

It then follows from Lemma~\ref{lem_graphmorphisms} (applied with $G$ playing the role of the monoid, $\Gamma$ as its Sch\"utzenberger graph,
and $\sgo$ as the $\Omega$ in the statement of the lemma) that for each $p$-vertex $v$, there is a morphism from $\Gamma$ to $\sgo$ taking $1$ to $v$. 
Indeed, by the assumptions on the monoid presentation defining $G$
having the relations $a \overline{a}=1$ for all $a \in A$ it follows that    
$G = \mathrm{Mon}\langle A \mid r_1, \dots, r_k \rangle = 
 \mathrm{Inv}\langle A \mid r_1, \dots, r_k \rangle = T$. 
Hence it follows that working with this inverse monoid presentation for $G$ then the Cayley graph of $\Gamma$ of $G$ is isomorphic to $S\Gamma_T(1)$ the Sch\"utzenberger graph of $1$ of $G$ viewed as the inverse monoid $T$. 
Now from the observations in the previous paragraph it follows that all the $p$-vertices 
in $S\Gamma_M(1)$ 
that can be reached from a fixed $p$-vertex by a word over $A \cup A^{-1}$ is also a $p$-vertex, and for every vertex in that set every relator labels a closed path.      
Also the graph on this collection of $p$-vertices is clearly bi-deterministic since it is a subgraph of $S\Gamma_M(1)$ which is bi-deterministic.   
Hence the conditions Lemma~\ref{lem_graphmorphisms} are satisfied which completes the proof of the claim that for each $p$-vertex $v$, there is a morphism from $\Gamma$ to $S\Gamma_M(1)$ taking $1$ to $v$.  
Moreover, by our observations above about the maximal group image being the  
free product of $G$ (generated by $A$) with a free group freely generated by $\lbrace p_0, \dots, p_k, z \rbrace$, with $d$ mapping to $1$, it follows that 
if we compose this morphism from $\Gamma$ to $\sgo$ with the map from $\sgo$ to the maximal group image we see that distinct vertices of $\Gamma$ will map to elements which differ in the maximal group image, and therefore must be different in $M$, so that the map from $\Gamma$ to $\sgo$ is injective.
Note also that it follows from this argument that 
in addition to the edges labelled by generators from $A$  
the embedded copy of $\Gamma$ at a $p$-vertex $v$ also has a loop labelled by $d$ at every vertex. 
Moreover, by mapping to the maximal group image it is readily seen that 
these account for all the edges between vertices in this embedded copy of $\Gamma$, that is, the induced subgraph of $S\Gamma(1)$ on this set of vertices is isomorphic to $\Gamma$ with a loop labelled by $d$ added to every vertex.     

In particular, the end of the $z$-vertex starting at $1$ is a $p$-vertex, and therefore is the root of an embedded copy of $\Gamma$. Moreover, an easy inductive
argument using the penultimate relation in the presentation shows that every vertex in this copy of $\Gamma$ corresponding to an element of the subgroup $H$ is a $z$-vertex.
Since in the embedded copy of $\Gamma$ every edge labelled by $b \in B$ has a corresponding reverse edge labelled by $\overline{b} \in B$ it then follows that     
for every word $w$ over $B$ (including the empty word), $zwz'$ can be read both into and out of the start vertex of $\sgo$ which means that these
elements represent units of the inverse monoid $M$. 
Furthermore, it is easy to see that for any two such words $zuz'$ and $zvz'$ we have that $(zuz')(zvz') = z(uv)z'$ in $M$ (by Lemma~\ref{lem_reducerightinvert} since $zuz'$ and $zvz'$ are both right invertible in $M$), 
and $zuz' = zvz'$ in $M$ if and only
if $u = v$ in $H$.
For the non-trivial direction of the latter claim, if $zuz' = zvz'$ in $M$ then
$zuz' = zvz'$ in the maximal group image which implies $u = v$ in the maximal group image 
of $M$ 
and thus $u=v$ in $H$. 
It follows that 
the set of elements of $M$ represented by the set of words $\{ zwz' : w \in B^* \}$  
forms a subgroup of the group of units of $M$ that is isomorphic to the
group $H$.
Call this subgroup $C$.

To complete the proof that the group of units of $M$ is isomorphic to $H$ it    
suffices to check that $M$ has no units other than those in $C$. By 
\cite[Theorem 1.3]{GrayRuskucUnits} and \cite[Proposition 4.2]{Ivanov:2001kl} 
the relators in a special inverse monoid can be factorised into subwords representing units, in such a way that the factors generate the entire group of units. So it suffices to check that if a relator can be written $uvw$ where $u,v$ and $w$ represent units then $v$ is in $C$. First notice that if we add relations to the presentation identifying $d$ and the generators in $A$ with $1$, we obtain a natural
morphism from $M$ onto the monoid:
\begin{align*}
N = \mathrm{Inv}\langle \ p_0, \dots, p_k, \ z \ \mid \ &p_i p_i' = 1 &(i = 0, \dots, k) \\
&zz' = 1 \\
&z \left( \prod_{i=0}^k p_i' p_i \right) z' = 1 \ \rangle
\end{align*}
Notice that $N$ itself has trivial group of units: indeed, it has a homomorphism onto the bicyclic monoid $B =
\mathrm{Inv}\langle b \mid bb' = 1 \rangle$ taking $z$ to $b^2$ and all the $p_i$s
to $b$. Any unit in $N$ must map to $1$ under this map. If there are non-trivial units there must be a proper prefix of a relator which maps to $1$ under this
map, but it is easy to see that all prefixes of relators map to $b$ or $b^2$.
Note that this argument in particular shows that none of the generators $\{p_0, p_1, \ldots, p_k, z\}$ is invertible in $N$.    

Clearly units in $M$ must map to units in $N$; so units in $M$ must map to $1$ in $N$.
Thus, it suffices to consider factorisations of $uvw$ of relators (with $u$ or $w$ possibly empty) such that $u$, $v$ and $w$ map to $1$ in $N$, and show that in such cases $v$ either is not a unit or is necessarily in $C$. 
The only such factorisations of relators 
in the presentation of $M$ 
are (i) the factorisations $(p_i a p_i') (p_i a' p_i')$ of the first type of relation, and (ii) the factorisations $(zbz')(zb'z')$ of the penultimate type. 
That these are the only such factorisations can be proved using fact that none of the generators $\{p_0, p_1, \ldots, p_k, z\}$ is invertible in $N$, and hence certainly none of them or their inverses are equal to $1$ in $N$.    
In case (ii) we can see that the factors are already in $C$, so to complete the proof of the theorem it suffices to show that $p_i a p_i'$ is not a unit for
each $a \in A$ and $i \in I_0$.

To this end, 
we define an infinite graph $\Omega$ recursively as follows. The graph $\Omega$ has a root vertex at which for each $x \in \lbrace z, p_0, \dots, p_i \rbrace$
there is an $x$-edge going out of the vertex with a copy of $\Omega_x$ at the far end, where, $\Omega_x$ is defined as follows
For each $i \in \lbrace 0, 1, \dots, k \rbrace$, a \textit{$p_i$-zone} $\Omega_{p_i}$ consists of a copy of the free monoid Cayley graph on $A$ (rooted at $1$) with
\begin{itemize}
\item at each vertex and for each $x \in \lbrace z, p_0, \dots, p_k \rbrace$, an $x$-edge going out with a copy of $\Omega_x$ at the far end;
\item at each vertex (except the root) an edge coming in labelled $p_i$, with a copy of $\Omega_{p_i'}$ at the far end;
\item if $i\geq 1$, at each vertex $v$ and for each relator $r_i$ an edge labelled $d$ from $v$ to the end of the path starting at $v$ and labelled $r_i$; and
\item if $i=0$,  at each vertex $v$ a loop labeled $d$.
\end{itemize}
A \textit{$z$-zone} $\Omega_{z}$ consists of a copy of the Cayley graph $\Gamma$ of the group $G$ with respect to the generating set $A$ with      
\begin{itemize}
\item at each vertex and for each $x \in \lbrace z, p_0, \dots, p_k \rbrace$ an $x$-edge going out with a copy of $\Omega_x$ at the far end;
\item at each vertex and for each $x \in \lbrace p_0, \dots, p_k \rbrace$ an $x$-edge coming in with a copy of $\Omega_{x'}$ at the far end;
\item at each vertex 
of the copy of $\Gamma$ 
corresponding to an element of $H$, a $z$-edge coming in with a copy of $\Omega_{z'}$ at the far end; and
\item at each vertex a loop labelled $d$.
\end{itemize}

For $x \in \lbrace z, p_0, \dots, p_k \rbrace$, an \textit{$x'$-zone} $\Omega_{x'}$ consists of a root vertex with 
\begin{itemize}
\item for each $y \in \lbrace z, p_0, \dots, p_k \rbrace$ with $y \neq x$ an $y$-edge going out with a copy of $\Omega_y$ at the far end.
\end{itemize}
See Figure~\ref{fig_pictorial2} for an illustration of the graph $\Omega$.

%
% Picture of the graph \Omega constructed in the proof below 
%
\begin{figure}
\begin{center}
\begin{tikzpicture}[scale=.3,  
TRectangle/.style ={draw, rectangle, thick, minimum height=8em, minimum width=8em}]
\filldraw (-4,0) circle (7pt);
%% Central square  
\draw[thick, fill=gray!20] (0,0) -- (8,0) -- (0,8) -- (0,0);
\draw[thick] (8,0) -- (8,8) -- (0,8);
%% Top left square and then work around clockwise all the small squares  
%%
%% Triangle of A^* on the left   
%
\draw[thick] (-10,2) -- (-10-4, 2+8) -- (-10+4, 2+8) -- (-10,2);
%
% Labels for central square 
%
\node at (2,4) {\scriptsize $\Gamma_{H}$};
\node at (3.5,7) {\scriptsize $\Gamma \setminus \Gamma_{H}$};
%%
%
% Labels for triangle on the left  
%
\node at (-10,7) {\scriptsize $A^*$};
\node at (-10.3,13) {\scriptsize $z$};
\node at (-8.7,13) {\scriptsize $p_0$};
\node at (-5.4,13) {\scriptsize $p_k$};
\node at (-7.1,13) {\scriptsize $\ldots$};
\node at (-3.4,8.6) {\scriptsize $p_i$};
%%
%%
%% Labels below the triangle on the left 
%%
\node at (-8,-0.5) {\scriptsize $p_0$};
\node at (-9,1.2) {\scriptsize $\vdots$};
\node at (-8,2.3) {\scriptsize $p_i$};
\node at (-6.2,2.6) {\scriptsize $\iddots$};
\node at (-6.2,4.3) {\scriptsize $p_k$};
%%
%%
%% Main top right arrows 
\arrowdraw{0.6}{6,6}{4,12};
\arrowdraw{0.6}{6,6}{6,12};
\arrowdraw{0.6}{6,6}{8,12};
\arrowdraw{0.5}{12,8}{6,6};
\arrowdraw{0.5}{12,6}{6,6};
\arrowdraw{0.5}{12,4}{6,6};
%%
%% Main bottom left arrows  
\arrowdraw{0.6}{2,2}{4,-4};
\arrowdraw{0.6}{2,2}{2,-4};
\arrowdraw{0.6}{2,2}{0,-4};
\arrowdraw{0.5}{-4,4}{2,2};
\arrowdraw{0.5}{-4,2}{2,2};
\arrowdraw{0.5}{-4,0}{2,2};
\arrowdraw{0.6}{-4,0}{-10,0};
\arrowdraw{0.6}{-4,0}{-10,2};
\arrowdraw{0.6}{-4,0}{-6,6};
\arrowdraw{0.6}{-8,8}{-10,14};
\arrowdraw{0.6}{-8,8}{-8,14};
\arrowdraw{0.6}{-8,8}{-6,14};
\arrowdraw{0.5}{-2,8}{-8,8};
\arrowdraw{0.6}{12,6}{18,8};
\arrowdraw{0.6}{12,6}{18,6};
\arrowdraw{0.6}{12,6}{18,4};
\arrowdraw{0.6}{12,6}{14,0};
%
%%
%
% Some labels 
%
%
\node at (3.8,11) {\scriptsize $z$};
\node at (5.2,11) {\scriptsize $p_0$};
\node at (6.9,11) {\scriptsize $\ldots$};
\node at (4.2+4.3,11) {\scriptsize $p_k$};
%
% Labels at bottom of picture 
%
\node at (-0.2,-3) {\scriptsize $z$};
\node at (1.4,-3) {\scriptsize $p_0$};
\node at (2.8,-3) {\scriptsize $\ldots$};
\node at (0.3+4.2,-3) {\scriptsize $p_k$};
\node at (-2,4) {\scriptsize $p_k$};
\node at (-3,3.2) {\scriptsize $\vdots$};
\node at (-2,1.4) {\scriptsize $p_0$};
\node at (-2,3.8-3.6) {\scriptsize $z$};
\node at (10.5,8.2) {\scriptsize $p_0$};
\node at (10.5,7.1) {\scriptsize $\vdots$};
\node at (11.4,6.5) {\scriptsize $p_i$};
\node at (10.5,5.6) {\scriptsize $\vdots$};
\node at (10.5,3.9) {\scriptsize $p_k$};
\node at (16.5,8) {\scriptsize $z$};
\node at (16.5,6.5) {\scriptsize $p_0$};
\node at (16.5,5.5) {\scriptsize $\vdots$};
\node at (19.2,3.5) {\scriptsize $p_j \; \; (j \neq i)$};
\node at (15,3) {\scriptsize $\iddots$};
\node at (12.9,1) {\scriptsize $p_k$};
\end{tikzpicture}
\end{center}
\caption{
An illustration of part of the graph $\Omega$ constructed in the proof of Theorem~\ref{thm_possibleunits}.  
The black vertex is the root, 
the square in the centre represents a $z$-zone $\Omega_z$ with the copy of $\Gamma$ partitioned into vertices $\Gamma_H$ representing elements of $H$ and its complement, 
while the triangle on the left represents a $p_i$-zone $\Omega_{p_i}$ where the $d$-labelled edges in the $p_i$-zone have been omitted from the diagram.     
}\label{fig_pictorial2}
\end{figure}
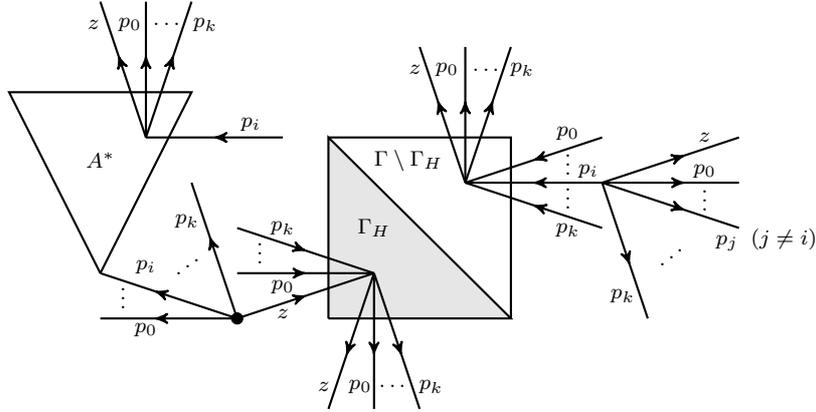

It is immediate from the definition of the zones that $\Omega$ is bi-deterministic, since we were careful to ensure that there are never two edges with the
same label leaving the same vertex, and that where we attach a $\Omega_x$ at the end of a new edge, it is of a type whose root has no existing $x$-edge
coming in.

Next we claim that every relator can be read around a closed path at every vertex. First notice that every vertex is the start of a $z$-edge leading into a $z$-zone, and a
$p_i$ edge leading into either a $p_i$-zone or a $z$-zone (not necessarily at the root). With this observation it is easy to check that the defining relators which are fundamental idempotents
can all be read (necessarily along a closed path, since the graph is bi-deterministic) at every vertex.

For the relators of the form $p_i r_i d' p_i'$, notice that reading the initial $p_i$ from any vertex will take us into either a $p_i$-zone or a $z$-zone. If it is a $p_i$-zone then $r_i$ can be read in the free monoid Cayley graph, and we have inserted a $d$-edge from there back to the point at which we came in. If it is a $z$-zone then since
$r_i$ represents $1$ in $G$, it can be read around a closed path, and $d$ can then be read around a loop. In both cases, the original $p_i'$ can then be read back
along the original edge to the start point. Similarly, for the relators of the form $p_0 d p_0'$, reading $p_0$ from any vertex takes us to a $p_0$-zone or a $z$-zone, whereupon
we can read $d$ around a loop and then $p_0'$ back to the start.

Since 
$\Omega$ is bi-deterministic and 
in $\Omega$ every relator can be read around a closed path at every vertex 
it follows from Lemma~\ref{lem_graphmorphisms} 
that there is a morphism from the Sch\"utzenberger graph $S \Gamma(1)$ to $\Omega$. Notice for each $i$, that the vertex which $p_i$ maps
to under this morphism is the root of a $p_i$-zone, and therefore for each $a \in A$ has no $a$-edge coming in. It follows that $a p_i'$ cannot be read into the root
of $\Omega$, and hence $a p_i'$ cannot be read into the root of $S\Gamma(1)$. Therefore, the word $a p_i'$ is not left invertible, which means that  
$p_i a p_i'$ 
cannot
be a unit. This completes the proof.
\end{proof}

We remark that in fact the graph $\Omega$ constructed in the proof of Theorem~\ref{thm_possibleunits} is isomorphic (via the given morphism) to
$S\Gamma(1)$. To show this requires more work and is not needed for the above proof of the theorem, but an alternative proof of the
theorem could be obtained by establishing this fact, observing that the automorphism group of $\Omega$ is
easily seen to be $H$, and using the fact that the group of units in an inverse monoid is always the automorphism
group of $S \Gamma(1)$. In fact this viewpoint provided the intuition from which we arrived at the presentation used in the proof.

We also remark on the high proportion of the proof which is devoted to showing that we do not have extra ``undesired'' group elements. Proving that
unwanted things do \textit{not} live in particular subgroups seems to be by some distance the hardest problem when working in this area, in the same way that
proving certain words are \textit{not} equal is often the hardest part of working with a group or monoid presentation.

It remains an open question which groups arise as groups of units of \textit{one-relator} special inverse monoids. Theorem~\ref{thm_possibleunits} does not really
provide any help with this particular question, since even after eliminating fundamental idempotent relators using Lemma~\ref{lemma_removeidempotents}, the number of relations in the
presentation constructed will be one more than the number of relations in a \textit{special monoid} presentation for the underlying group, which may already be more than the number of relations in a group presentation. We shall see below (Corollary~\ref{cor_onerelator}) that every finitely generated subgroup of a one-relator group arises as a group $\GreenH$-class of a one-relator special inverse monoid, but we do not know if all such groups arise as groups of units of one-relator special inverse monoids. We also do not know whether
groups of units in one-relator special inverse monoids are necessarily finitely presented. It was shown in \cite{GrayRuskucUnits} that a positive answer to the latter question would imply that all one-relator groups are coherent, resolving a long-standing open problem first posed by G.~Baumslag \cite[page 76]{Baumslag1974}. We refer to the recent survey article of Wise \cite{WiseCoherence2020} for more background on this and other aspects of the theory of coherent groups.

\section{Group $\GreenH$-classes}\label{sec_GroupHClasses}

In the previous section we exactly characterised possible groups of units of finitely presented special inverse monoids; these are (as was already known) all
finitely generated, and it is natural to ask whether the same is true for the other group $\GreenH$-classes in such a monoid. Indeed, in the case of special (non-inverse) monoids it is even known that the other group $\GreenH$-classes are all isomorphic to the group of units, so one might even ask if this is the case for special
inverse monoids. In section we shall show that the answer to both questions is negative: it transpires that every recursively
presented group (not just the finitely generated ones) can arise as a group $\GreenH$-class in a finitely presented special inverse monoid! The same construction will also allow us to show that a finitely
presented special inverse monoid can have infinitely many pairwise non-isomorphic group $\GreenH$-classes.

Our construction will need some preliminary results, starting with the following straightforward group-theoretic fact:
\begin{lemma}\label{lemma_stabiliser}
Let $K$ be a subgroup of a group $G$, and $t \in G$. Then the
setwise stabiliser of $K \cup tK$ under the left translation action of $G$,
$$\lbrace g \in G \ \mid \ g(K \cup tK) = K \cup tK \rbrace,$$
is either equal to, or an index $2$ overgroup of, $K \cap tKt^{-1}$, with equality provided $K \cap tKt = \emptyset$.
\end{lemma}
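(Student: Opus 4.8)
The plan is to work directly with cosets. Set $S = \{g \in G \mid g(K \cup tK) = K \cup tK\}$, which is clearly a subgroup of $G$. First I would observe that $K \cup tK$ is a union of at most two left cosets of $K$: namely $K$ itself and $tK$ (which may coincide if $t \in K$, but then the statement is trivial since the stabiliser is just $K = K \cap tKt^{-1}$, so assume $t \notin K$). Since $G$ permutes the left cosets of $K$ by left translation, any $g \in S$ must permute the two-element set $\{K, tK\}$. This gives a homomorphism $S \to \mathrm{Sym}\{K, tK\} \cong \mathbb{Z}/2$, whose kernel $S_0$ consists of those $g$ fixing both cosets setwise, and $S$ is either equal to $S_0$ or an index $2$ overgroup of it, according to whether this homomorphism is trivial or surjective.

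**Identifying the kernel.** The next step is to check that $S_0 = K \cap tKt^{-1}$. An element $g$ fixes $K$ setwise iff $gK = K$ iff $g \in K$; it fixes $tK$ setwise iff $g(tK) = tK$ iff $t^{-1}gt \in K$ iff $g \in tKt^{-1}$. Hence $S_0 = K \cap tKt^{-1}$, as claimed. This already shows $S$ is either equal to, or an index $2$ overgroup of, $K \cap tKt^{-1}$.

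**The equality clause.** For the final assertion, suppose $K \cap tKt = \emptyset$; I need to show that no $g \in S$ can swap $K$ and $tK$, i.e. $S = S_0$. If some $g$ had $gK = tK$ and $g(tK) = K$, then from $gK = tK$ we get $g \in tK$, say $g = tk$ with $k \in K$; and from $g(tK) = K$ we get $gt \in K$, i.e. $tkt \in K$, so $tkt \in K \cap tKt$ — wait, more precisely $tkt$ is an element which lies in $K$ and also has the form $t k t$ with $k \in K$, so it lies in $K \cap tKt$, contradicting emptiness. Hence the swap is impossible and $S = S_0 = K \cap tKt^{-1}$.

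**Main obstacle.** There is no serious obstacle here; the only thing to be careful about is bookkeeping with left versus right cosets and the precise form of the hypothesis ($K \cap tKt = \emptyset$ rather than $K \cap tKt^{-1}$), making sure the element exhibited in the contradiction really does have the stated form. The degenerate case $t \in K$ (where $K \cup tK = K$) should be mentioned so that ``index $2$ overgroup'' versus ``equality'' is unambiguous, though it is subsumed by the general argument. I would write the whole thing in three or four short sentences.
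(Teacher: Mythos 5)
Your proposal is correct and follows essentially the same route as the paper's proof: both identify the kernel of the permutation action on $\{K, tK\}$ as $K \cap tKt^{-1}$ and rule out the swap case under the hypothesis $K \cap tKt = \emptyset$ by exhibiting an element $tkt \in K \cap tKt$. The only (harmless) difference is your explicit handling of the degenerate case $t \in K$, which the paper leaves implicit.
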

\begin{proof}
Let
$$H = \lbrace g \in G \mid gK = K \textrm{ and } gtK = tK \rbrace$$
be the intersection of the setwise stabilisers of $K$ and $tK$. We claim that $H = K \cap tKt^{-1}$.
Indeed, if $g \in K \cap tKt^{-1}$ then for every $h \in K$ we have $gh \in K$, while writing $g = tkt^{-1}$ with $k \in K$ we have
$gth = tkt^{-1}th = t(kh) \in tK$, so that $g \in H$. Conversely, if $g \in H$ then $gK = K$ and $gtK = tK$; the first equation gives $g \in K$ and the second gives $t^{-1}gtK = K$ so that $t^{-1}gt \in K$ and $g \in K \cap tKt^{-1}$.

Now if $g(K \cup tK) = K \cup tK$ then because left cosets form a partition either (i) $gK = K$ and $gtK = tK$
or (ii) $gK = tK$ and $gtK = K$. It follows that there is a morphism from the stabiliser of $K \cup tK$ to the cyclic group $\mathbb{Z}_2$, taking $g$ to $0$ in case (i) and
$1$ in case (ii). By definition the kernel of this morphism is $H = K \cap tKt^{-1}$, so the latter has index $1$ or $2$ in the stabiliser, depending on whether
the morphism is trivial or surjective.

Moreover, if $K \cap tKt = \emptyset$ then case (ii) cannot arise. Indeed, if $gK = tK$ and $gtK = K$ then the first equation gives $g = th$ for some $h \in K$, whereupon the second gives $tht \in K$; but now $tht$ lies in $K \cap tKt$, contradicting our hypothesis that this intersection is empty. Thus, in this case the morphism is trivial and
we have equality.
\end{proof}

\begin{lemma}\label{lemma_usefulgroup}
Let $K$ be a finitely presented group, and $H$ a recursively enumerable subgroup of $K$. Then there exists a finitely
presented group containing two conjugate subgroups $K_1$ and $K_2$ such that $K_1 \cong K_2 \cong K$ and $K_1 \cap K_2 \cong H$. Moreover, this
group can be chosen in such a way that $t K_1 t^{-1} = K_2$ for some $t$ such that
$K_1 \cap t K_1 t = \emptyset$.
\end{lemma}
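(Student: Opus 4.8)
The plan is to produce the required group $G$ by a short chain of standard constructions: an amalgamated free product that forces the intersection to be $H$, a Higman embedding to regain finite presentability, and finally a single HNN extension to make the two copies of $K$ conjugate and to supply the disjointness condition.

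\textbf{Step 1 (amalgam).} Fix a finite presentation $K = \mathrm{Gp}\langle X \mid R\rangle$, take an isomorphic copy $\overline{K} = \mathrm{Gp}\langle \overline{X} \mid \overline{R}\rangle$ via the map $\phi\colon K \to \overline{K}$, $x \mapsto \overline{x}$, and put $\overline{H} = \phi(H)$. Since $H$ is recursively enumerable in $K$, the set $W$ of words over $X^{\pm 1}$ representing elements of $H$ is recursively enumerable. Form the amalgamated free product $P = K *_{H = \overline{H}} \overline{K}$ identifying $H$ with $\overline{H}$ via $\phi$; concretely $P = \mathrm{Gp}\langle X, \overline{X} \mid R, \overline{R}, \{w = \overline{w} : w \in W\}\rangle$, which is a \emph{recursive} presentation precisely because $H$ is a recursively enumerable subgroup. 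By the normal form theorem for amalgamated free products, $K$ and $\overline{K}$ embed in $P$ and satisfy $K \cap \overline{K} = H$.

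\textbf{Step 2 (Higman embedding and HNN).} By Higman's embedding theorem \cite{Higman61}, $P$ embeds in a finitely presented group $Q$; since embeddings preserve intersections, $K \cap \overline{K} = H$ still holds in $Q$. Now form the HNN extension $G = \mathrm{Gp}\langle Q, t \mid t^{-1}\overline{x}\,t = x \ (x \in X)\rangle$, whose associated subgroups are $\overline{K}$ and $K$, both finitely generated; since $Q$ is finitely presented, $G$ is finitely presented, and $Q$ (hence $K$, $\overline{K}$, $H$) embeds in $G$ with $t^{-1}\overline{K}t = K$, that is, $tKt^{-1} = \overline{K}$. Set $K_1 = K$ and $K_2 = \overline{K}$: these are each isomorphic to $K$, are conjugate with $tK_1t^{-1} = K_2$, and $K_1 \cap K_2 = H$. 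For the ``moreover'' clause, $K_1 \cap tK_1t = \emptyset$: an element of $tK_1t$ has the form $tkt$ with $k \in K$, and if also $tkt = k' \in K$ then $tktk'^{-1} = 1$ in $G$; but $tktk'^{-1}$ is a Britton-reduced word in the stable letter of $t$-length two — no pinch is possible since its two occurrences of $t$ carry the same sign — so it cannot represent $1$, a contradiction. This is exactly the hypothesis $K_1 \cap tK_1t = \emptyset$ required for later application of Lemma~\ref{lemma_stabiliser}.

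The only genuinely deep ingredient is Higman's embedding theorem, which is what converts the recursive enumerability of $H$ into finite presentability of $Q$; the remaining manipulations are routine normal-form bookkeeping for amalgamated products and HNN extensions. The point that needs care is verifying at each stage that the intersection of the two copies of $K$ is exactly $H$ rather than something larger (this is automatic from the amalgam and then preserved under the two embeddings), together with the simple but essential observation that the presentation of $P$ is recursive because $H$ is a recursively enumerable subgroup.
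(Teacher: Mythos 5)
Your proof is correct, but it takes a genuinely different route from the paper's. The paper performs a single HNN extension of $K$ itself, with stable letter $t$ and both associated subgroups equal to $H$ identified by the identity map: in $G_1 = \langle K, t \mid tht^{-1}=h \ (h \in H)\rangle$ the standard facts about HNN extensions give $K \cap tKt^{-1} = H$ and, via Britton's Lemma, $K \cap tKt = \emptyset$ directly, and only then is Higman's theorem invoked to embed the recursively presented $G_1$ into a finitely presented $G_2$, all the relevant data being preserved by the embedding. You instead split the two tasks: the amalgam $K *_{H} \overline{K}$ arranges that the intersection of the two copies of $K$ is exactly $H$, the Higman embedding restores finite presentability, and a final HNN extension of the finitely presented ambient group $Q$ (legitimate and finitely presented because the associated subgroups $K$ and $\overline{K}$ are finitely generated) makes the two copies conjugate. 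Both arguments rest on the same two pillars --- Higman's embedding theorem to convert recursive enumerability of $H$ into finite presentability, and Britton's Lemma for the disjointness clause --- and both correctly use that injective homomorphisms preserve intersections of subgroups. The paper's version is slightly more economical (one construction before Higman, nothing after), while yours has the mild advantage that the Britton's Lemma argument for $K_1 \cap tK_1t = \emptyset$ is carried out in the final group $G$ itself, which is explicitly an HNN extension, rather than being established in an intermediate group and transported along an embedding.
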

We note that a subgroup of a finitely presented group is recursively enumerable if and only if it is generated by a recursively enumerable set.
\begin{proof}
Let $G_1$ be an HNN extension of $K$, with stable letter $t$ conjugating each element of $H$ to itself. Since $K$ is finitely presented and $H$ is recursively enumerable, $G_1$ is finitely generated and recursively presented by a presentation consisting of the presentation for $K$ with an extra generator $t$ and extra relations $twt^{-1} = w$ for each word $w$ representing an element of $H$. So by the Higman embedding theorem, $G_1$ can be embedded in a finitely presented group $G_2$. But now $G_2$ contains two conjugate subgroups $K_1 = K$ and $K_2 = tKt^{-1}$, which intersect exactly in a copy of $H$, as required. Finally, it follows easily from Britton's Lemma \cite[page 181]{Lyndon:2001lh} that $K_1 \cap t K_1 t$ is empty in $G_1$ and hence also in $G_2$.
\end{proof}

\begin{theorem}\label{thm_stabiliser}
Let $G$ be a group, and $H$ a subgroup of $G$. Then there exists an $E$-unitary special inverse monoid $M$ such 
that for every \textbf{finite} subset $X \subseteq G$, the setwise stabiliser in $G$ of the union of the left $H$-cosets determined by $X$,
$$K = \lbrace g \in G \mid gXH= XH \rbrace,$$
is isomorphic to a group $\GreenH$-class in $M$. If $G$ is finitely presented and $H$ is finitely generated then $M$ can be chosen to be finitely
presented with the same number of defining relations as $G$ (unless $G$ is free, in which case $M$ will have $1$ defining relation). If $H$ is trivial then
$M$ can be chosen to be the free product of $G$ with a free inverse monoid.
\end{theorem}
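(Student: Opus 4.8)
The plan is to realise $K$ as the automorphism group of the Sch\"utzenberger graph of a suitable idempotent of $M$, using the fact \cite[Theorem~3.5]{Stephen:1990ss} that the maximal subgroups of an inverse monoid are exactly the automorphism groups of its Sch\"utzenberger graphs. The monoid $M$ will be given by an explicit special presentation built on the one used in the proof of Theorem~\ref{thm_possibleunits}: fix a presentation of $G$ (finite if $G$ is finitely presented) and a generating set $B$ of $H$ (finite if $H$ is finitely generated), adjoin auxiliary letters $p_0,\dots,p_k$, $d$ and a letter $z$, and take $M$ to be defined by the ``$p_i$/$d$'' relators $p_iap_i'p_ia'p_i'=1$, $p_ir_id'p_i'=1$, $p_0dp_0'=1$ exactly as in the proof of Theorem~\ref{thm_possibleunits} (their effect is that every vertex of every Sch\"utzenberger graph carrying all the $p_i$-edges coming in --- a ``$p$-vertex'' --- is the root of an embedded copy of $\Gamma=\mathrm{Cay}(G,A)$), together with a relator making the far end of every $z$-edge a $p$-vertex (so a $z$-edge opens a fresh rigid copy of $\Gamma$) and, for each $b\in B$, an ``equivariance'' relator forcing the $z$-edges inside a copy of $\Gamma$ to commute with right translation by $b$, so that the pattern of $z$-edges inside a copy of $\Gamma$ respects left cosets of $H$. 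The maximal group image is the free product of $G$ with the free group $F$ on the auxiliary letters, with $d\mapsto 1$; absorbing the fundamental-idempotent relators into the remaining ones by Lemma~\ref{lemma_removeidempotents} brings the number of defining relations down to that of $G$ (or to $1$ when $G$ is free).

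Next I would pin down the relevant Sch\"utzenberger graphs as in the proof of Theorem~\ref{thm_possibleunits}: for each idempotent $e$ of interest, hand-build a bi-deterministic ``fractal'' graph $\Omega_e$ (copies of $\Gamma$ joined by the auxiliary letters, with the $z$-pattern dictated by $e$) in which every defining relator reads round a closed path at every vertex, use Lemma~\ref{lem_graphmorphisms} to obtain a morphism $S\Gamma(e)\to\Omega_e$, and then use the fact that the element of $G*F$ read along a path in $\Omega_e$ is uniquely determined (the free-product argument from the proof of Theorem~\ref{thm_possibleunits}) to conclude that this morphism is an isomorphism. The composite $S\Gamma(e)\cong\Omega_e\to G*F$ is then injective, which is exactly the $E$-unitary criterion, so $M$ is $E$-unitary (compare Lemma~\ref{lem_fullSubgraph}); Lemma~\ref{lemma_eunitarytransfer} can be used to keep this property if the presentation is assembled in stages.

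Now, given a finite $X=\{x_1,\dots,x_n\}$ with words $w_i$ representing the $x_i$, I would take $e_X$ to be a fundamental idempotent, built from the $w_i$ and the auxiliary letters, designed so that $\Omega_{e_X}$ has a ``core'' copy $\Gamma_0$ of $\mathrm{Cay}(G,A)$ rooted at $e_X$ carrying a distinctive, rigid $z$-structure at precisely the vertices of the left-coset union $XH$ and only generic (rigid) hanging structure elsewhere. Since $\mathrm{Aut}(\mathrm{Cay}(G,A))\cong G$ acting simply transitively by left translation, and every piece of hanging structure is rigid and hence carried along by any automorphism, an automorphism of $S\Gamma(e_X)\cong\Omega_{e_X}$ restricts on the (characteristic) core to left translation by some $g$ with $g\,XH=XH$, i.e.\ $g\in K$; conversely each such $g$ gives an automorphism of a connected subgraph of $S\Gamma(e_X)$ containing the root in which $e_X$ is legible, which by Lemma~\ref{lem_StephenAutInvariant} extends uniquely to all of $S\Gamma(e_X)$. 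Hence $\mathrm{Aut}(S\Gamma(e_X))\cong K$, and by \cite[Theorem~3.5]{Stephen:1990ss} the group $\GreenH$-class of $e_X$ is isomorphic to $K$. The stated special cases are then easy: if $H$ is trivial there are no equivariance relators and one checks directly that $M$ may instead be taken to be the free product of $G$ with a free inverse monoid, and the relation count was arranged in the construction.

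The hard part will be the ``no unwanted structure'' content of the last two steps, which --- as the authors themselves note after the proof of Theorem~\ref{thm_possibleunits} --- is by some distance the most difficult aspect of arguments of this kind: one must check both that $S\Gamma(e_X)$ is no smaller than the hand-built $\Omega_{e_X}$ (so that no unexpected edge foldings collapse the $z$-structure) and, crucially, that the distinctive $z$-structure occurs at exactly the vertices of $XH$ and at no other vertex of $S\Gamma(e_X)$, so that $\mathrm{Aut}(S\Gamma(e_X))$ is exactly $K$ and not something larger. Shaping the equivariance relators and the word $e_X$ so that the core $\Gamma_0$ together with its $XH$-decoration is genuinely distinguishable from the other copies of $\Gamma$ and the other $z$-edges that the construction is forced to introduce is the delicate point on which the whole argument turns.
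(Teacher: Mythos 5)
Your high-level strategy --- decorate a copy of $\mathrm{Cay}(G,A)$ at the vertices of $XH$, show the decoration is preserved by every automorphism of the Sch\"utzenberger graph of a suitable idempotent, and invoke \cite[Theorem~3.5]{Stephen:1990ss} together with Lemma~\ref{lem_StephenAutInvariant} --- is exactly the paper's. But the specific construction you propose cannot deliver the quantitative claims of the theorem, and the step you defer is the theorem's actual content. First, the $p_i/d$ apparatus of Theorem~\ref{thm_possibleunits} is the wrong starting point: its non-idempotent relators $p_ir_id'p_i'$ and $p_0dp_0'$ survive the absorption of Lemma~\ref{lemma_removeidempotents}, so (as the paper notes immediately after that proof) you end up with one more relation than a \emph{special monoid} presentation of $G$, which is already typically more than a group presentation of $G$. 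You therefore lose both the ``same number of defining relations as $G$'' claim (and with it the one-relator application, Corollary~\ref{cor_onerelator}) and the claim that $M$ is a free product of $G$ with a free inverse monoid when $H$ is trivial. That apparatus exists only to prevent unwanted \emph{units}, which is irrelevant here: since we are targeting an $\GreenH$-class rather than the group of units, one can simply adjoin the fundamental idempotents $aa'=a'a=1$ for $a\in A$ (so $A$ generates a copy of $G$ among the units), leaving the relators of $G$ as the only non-idempotent ones.

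Second, and more seriously, a single marker letter $z$ cannot make the decoration ``distinctive'': in the tree of copies of $\Gamma$ making up the Cayley graph of the maximal group image $G*F$, every component reached along a $z$-edge is created with a $z$-edge coming in, and your equivariance relators then mark a whole $H$-coset in each such component, so ``carries the $z$-structure'' does not single out the root copy of $\Gamma$ or the set $XH$ inside it. The paper resolves this with \emph{two} marker letters $y,z$, the idempotent $e=\prod_{x\in X}xz'zy'yx'$, and two separate arguments: a quotient-to-the-tree-of-components argument showing that only the root component contains a vertex with both a $y$- and a $z$-edge coming in, and a $\mathbb{Z}$-grading/minimal-path argument (via Lemma~\ref{lem_fullSubgraph}) showing that within the root component the doubly-marked vertices are exactly $XH$. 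This is precisely the ``no unwanted structure'' verification you flag as the delicate point and then omit; without it there is no proof that $\mathrm{Aut}(S\Gamma(e))$ is $K$ rather than something larger. (A minor further point: proving $E$-unitarity by establishing $S\Gamma(e)\cong\Omega_e$ for all idempotents is far harder than necessary; the paper gets it directly from the $E$-unitarity of $G*\mathrm{Inv}\langle y,z\rangle$ together with Lemma~\ref{lemma_eunitarytransfer}, since the added relators hold in every group.)
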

\begin{proof}

Let $\mathrm{Gp}\langle A \mid R \rangle$ be a group
 presentation for $G$, choosing $A$ to be finite if $G$ is finitely generated and $R$ to be finite if $G$ is finitely presented. If $G$ is free choose $A$ to be a free
 generating set and $R$ to be empty. Let $B$ be a set of words over $A^{\pm 1}$ representing a monoid generating set for $H$, choosing $B$ to be empty if $H$ is trivial and finite if $H$ is finitely generated. Let $\Gamma$ be the Cayley graph of $G$ with respect to $A$.

Let $y$ and $z$ be new symbols not in $A$, and define
\begin{align*}
M \ = \ \mathrm{Inv}\langle A, y, z \ \mid \ & R,\  aa' = a'a=1 \ (a \in A), \\
&zbz'zb'z'=yby'yb'y'=1 \ (b \in B) \rangle.
\end{align*}
Notice that the presentation is finite provided $G$ is finitely presented 
and $H$ is finitely generated, 
and clearly presents the free product $G * \mathrm{Inv} \langle  y, z \rangle$ if $H$ is trivial (since we chose $B$ to be empty in that case). In the case that the presentation is finite, since all the relators except those in $R$ are fundamental idempotents, repeated application of Lemma~\ref{lemma_removeidempotents} will give a special inverse monoid presentation for $M$ with $|A|+2$ generators and $|R|$ relations (or $1$ relation if $R$ is empty, so that $G$ is free).

Because the relators in $R$ do not feature the letters $y$ and $z$ and the relators not in $R$ are all fundamental idempotents. It follows easily that this presentation when interpreted as a group presentation yields
a free product $G * \mathrm{Gp}\langle y, z \rangle$ of $G$ with a free group of rank $2$, so this free product is the maximal group image of $M$. In particular, it
follows that $A$ generates a copy of $G$ inside the group of units of $M$. 

To see that $M$ is $E$-unitary first note that
$$N \ = \ \mathrm{Inv}\langle A, y, z \ \mid R, \ aa'=a'a=1 \ (a \in A) \rangle \ = \ G * \mathrm{Inv} \langle  y,z \rangle$$
is the inverse monoid free product of the group $G$ and a free inverse monoid, both of which are $E$-unitary, and hence $G *  \mathrm{Inv} \langle  y,z \rangle$ is $E$-unitary. The fact that the free product of two $E$-unitary inverse monoids is again $E$-unitary can be proved, for example, by applying a result of Stephen \cite[Theorem 6.5]{Stephen98} that gives sufficient conditions for the amalgamated free product of two $E$-unitary inverse semigroups to be $E$-unitary; see \cite[Proof of Theorem 3.8]{Gray2020} for details. Hence, $N$ is $E$-unitary, and since the relators of the form $zbz'zb'z'=yby'yb'y'=1$ hold in every group, it follows from Lemma~\ref{lemma_eunitarytransfer} that $M$ is $E$-unitary.

Now let $X$ be a finite set of words over the generators, representing a finite subset of $G$, and consider the element
$$e = \prod_{x \in X} xz'zy'yx',$$
noting that the order of the product is unimportant, and the element $e$ is idempotent, because the factors are fundamental idempotents and idempotents commute in inverse semigroups. We claim that the $\GreenH$-class of $e$ is isomorphic to $K$.

To this end, consider the Schutzenberger graph $S \Gamma(e)$. First, we note that it contains a copy of the Cayley graph $\Gamma$, rooted at $e$. Indeed, any
word over $A^{\pm 1}$ is right invertible in $M$ (because of the relations of the form $aa'=a'a=1$) and hence readable from $e$, and two words representing the same element in $G$ are equal in $M$ (because of the relators in $R$ and those of the form $aa'=a'a=1$), so that the corresponding paths from $e$ must end in the same place. Conversely, if two words $u$ and $v$ over $A^{\pm 1}$ label paths from
$e$ ending in the same place then $eu = ev$ in $M$, and hence also in the maximal group image, which means that $u=v$ in the maximal group image $G \ast \mathrm{Gp}\langle y,z \rangle$ and hence $u=v$ in $G$.

We claim next that the vertices of $S \Gamma(e)$ corresponding to elements of $XH \subseteq G$ are exactly those which have both a $y$-edge and a $z$-edge coming in. Indeed, suppose $x \in X$
and $h \in H$ so that $xh \in XH$. 
Since the word defining $e$ can be read from the root of $S\Gamma(e)$, the word $xz'zy'yx'$ (which occurs as a factor of $e$ after a fundamental idempotent prefix) can be read from $e$, so the vertex corresponding to 
$x$ has both a $y$-edge (call it $e$) and a $z$-edge coming in. 
In particular in the case $h=1$ this shows that $xh=x$ does have both a $y$-edge and a $z$-edge coming in. If $h \neq 1$ then $h$ is a product
of elements of our chosen generating set for $H$, so is represented by a word of the form $b_1 \dots b_k$ where each $b_1, \dots, b_k \in B$.  
Now from the start of the $y$-edge $e$ we can read the (right invertible) word $(y b_1 y') (y b_2 y') \dots (y b_k y')$,
and it is easy to see that the final letter of this word must be read along a $y$-edge coming into the vertex corresponding to $xh$. An identical argument shows
that there is a $z$-edge coming into this vertex.

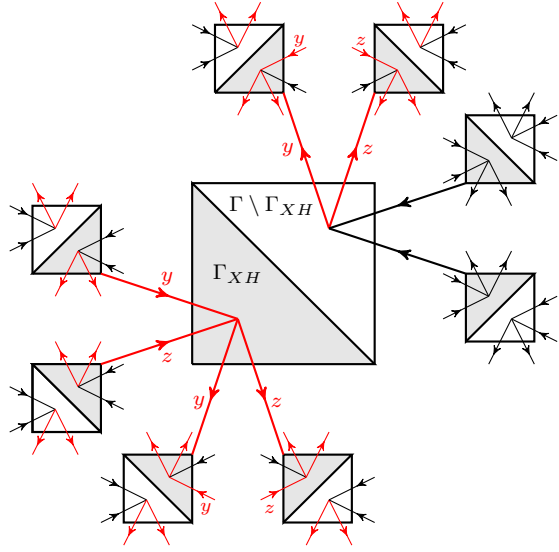
\begin{figure}
\begin{center}
\begin{tikzpicture}[scale=.3, 
TRectangle/.style ={draw, rectangle, thick, minimum height=8em, minimum width=8em}],
\draw[thick, fill=gray!20] (0,0) -- (8,0) -- (0,8) -- (0,0);
\draw[thick] (8,0) -- (8,8) -- (0,8);
\draw[thick, fill=gray!20] (1,12) -- (4,12) -- (4,15) -- (1,12);
\draw[thick] (1,12) -- (1,15) -- (4,15);
\draw[thick, fill=gray!20] (8,12) -- (11,12) -- (8,15) -- (8,12);
\draw[thick] (11,12) -- (11,15) -- (8,15);
\draw[thick, fill=gray!20] (12,8) -- (15,8) -- (12,11) -- (12,8);
\draw[thick] (15,8) -- (15,11) -- (12,11);
\draw[thick, fill=gray!20] (12,4) -- (15,4) -- (12,1) -- (12,4);
\draw[thick] (12,1) -- (15,1) -- (15,4);
\draw[thick, fill=gray!20] (4,-4) -- (7,-4) -- (4,-7) -- (4,-4);
\draw[thick] (7,-4) -- (7,-7) -- (4,-7);
\draw[thick, fill=gray!20] (0,-4) -- (0,-7) -- (-3,-4) -- (0,-4);
\draw[thick] (0,-7) -- (-3,-7) -- (-3,-4);
\draw[thick, fill=gray!20] (-4,0) -- (-4,-3) -- (-7,0) -- (-4,0);
\draw[thick] (-7,0) -- (-7,-3) -- (-4,-3);
\draw[thick, fill=gray!20] (-4,0) -- (-4,-3) -- (-7,0) -- (-4,0);
\draw[thick] (-7,0) -- (-7,-3) -- (-4,-3);
\draw[thick, fill=gray!20] (-4,4) -- (-4,7) -- (-7,4) -- (-4,4);
\draw[thick] (-7,4) -- (-7,7) -- (-4,7);
%%
%% 
%% Arrows 
%%
%% Main top right arrows 
\redarrowdraw{0.6}{6,6}{4,12};
\redarrowdraw{0.6}{6,6}{8,12};
\arrowdraw{0.5}{12,8}{6,6};
\arrowdraw{0.5}{12,4}{6,6};
%%
%% Main bottom left arrows  
\redarrowdraw{0.6}{2,2}{4,-4};
\redarrowdraw{0.6}{2,2}{0,-4};
\redarrowdraw{0.5}{-4,4}{2,2};
\redarrowdraw{0.5}{-4,0}{2,2};
%%
%% Small squares starting with the let most of the op set and 
%% then working around all the small squares clockwise
%%
\smallarrowdraw{0.4}{0,13}{2,14};
\smallarrowdraw{0.4}{0,15}{2,14};
\smallredarrowdraw{0.8}{2,14}{1,16};
\smallredarrowdraw{0.8}{2,14}{3,16};
\smallredarrowdraw{0.8}{3,13}{2,11};
\smallredarrowdraw{0.8}{3,13}{4,11};
\smallarrowdraw{0.4}{5,12}{3,13};
\smallredarrowdraw{0.4}{5,14}{3,13};
\smallarrowdraw{0.4}{7,12}{9,13};
\smallredarrowdraw{0.4}{7,14}{9,13};
\smallredarrowdraw{0.8}{9,13}{8,11};
\smallredarrowdraw{0.8}{9,13}{10,11};
\smallredarrowdraw{0.8}{10,14}{9,16};
\smallredarrowdraw{0.8}{10,14}{11,16};
\smallarrowdraw{0.4}{12,13}{10,14};
\smallarrowdraw{0.4}{12,15}{10,14};
\smallarrowdraw{0.4}{11,10}{13,9};
\smallarrowdraw{0.4}{11,8}{13,9};
\smallarrowdraw{0.8}{13,9}{12,7};
\smallarrowdraw{0.8}{13,9}{14,7};
\smallarrowdraw{0.8}{14,10}{13,12};
\smallarrowdraw{0.8}{14,10}{15,12};
\smallarrowdraw{0.4}{16,9}{14,10};
\smallarrowdraw{0.4}{16,11}{14,10};
\smallarrowdraw{0.4}{11,2}{13,3};
\smallarrowdraw{0.4}{11,4}{13,3};
\smallarrowdraw{0.8}{13,3}{12,5};
\smallarrowdraw{0.8}{13,3}{12+2,5};
\smallarrowdraw{0.4}{14+2,2+1}{14,2};
\smallarrowdraw{0.4}{14+2,2-1}{14,2};
\smallarrowdraw{0.8}{14,2}{14+1,2-2};
\smallarrowdraw{0.8}{14,2}{14-1,2-2};
\smallredarrowdraw{0.4}{5-2,-5-1}{5,-5};
\smallarrowdraw{0.4}{5-2,-5+1}{5,-5};
\smallredarrowdraw{0.8}{5,-5}{5-1,-5+2};
\smallredarrowdraw{0.8}{5,-5}{5+1,-5+2};
\smallredarrowdraw{0.8}{6,-6}{6-1,-6-2};
\smallredarrowdraw{0.8}{6,-6}{6+1,-6-2};
\smallarrowdraw{0.4}{6+2,-6-1}{6,-6};
\smallarrowdraw{0.4}{6+2,-6+1}{6,-6};
\smallredarrowdraw{0.8}{-1,-5}{-1-1,-5+2};
\smallredarrowdraw{0.8}{-1,-5}{-1+1,-5+2};
\smallarrowdraw{0.4}{-1+2,-5+1}{-1,-5};
\smallredarrowdraw{0.4}{-1+2,-5-1}{-1,-5};
\smallarrowdraw{0.4}{-2-2,-6+1}{-2,-6};
\smallarrowdraw{0.4}{-2-2,-6-1}{-2,-6};
\smallredarrowdraw{0.8}{-2,-6}{-2-1,-6-2};
\smallredarrowdraw{0.8}{-2,-6}{-2+1,-6-2};
\smallarrowdraw{0.4}{-2-2-4,-6+1+4}{-2-4,-6+4};
\smallarrowdraw{0.4}{-2-2-4,-6-1+4}{-2-4,-6+4};
\smallredarrowdraw{0.8}{-2-4,-6+4}{-2-1-4,-6-2+4};
\smallredarrowdraw{0.8}{-2-4,-6+4}{-2+1-4,-6-2+4};
\smallredarrowdraw{0.8}{-1-4,-5+4}{-1-1-4,-5+2+4};
\smallredarrowdraw{0.8}{-1-4,-5+4}{-1+1-4,-5+2+4};
\smallarrowdraw{0.4}{-1+2-4,-5+1+4}{-1-4,-5+4};
\smallarrowdraw{0.4}{-1+2-4,-5-1+4}{-1-4,-5+4};
\smallarrowdraw{0.4}{-6-2,6-1}{-6,6};
\smallarrowdraw{0.4}{-6-2,6+1}{-6,6};
\smallredarrowdraw{0.8}{-6,6}{-6-1,6+2};
\smallredarrowdraw{0.8}{-6,6}{-6+1,6+2};
\smallarrowdraw{0.4}{-5+2,5+1}{-5,5};
\smallarrowdraw{0.4}{-5+2,5-1}{-5,5};
\smallredarrowdraw{0.8}{-5,5}{-5-1,5-2};
\smallredarrowdraw{0.8}{-5,5}{-5+1,5-2};
%
%
% Some  labels 
%
\node at (2,4) {\scriptsize $\Gamma_{XH}$};
\node at (3.5,7) {\scriptsize $\Gamma \setminus \Gamma_{XH}$};
\node at (4.2,9.5) {\color{red} \scriptsize $y$};
\node at (4.2+3.5,9.5) {\color{red} \scriptsize $z$};
\node at (4.7,14.3) {\color{red} \tiny $y$};
\node at (4.7+2.7,14.3) {\color{red} \tiny $z$};
\node at (0.3,-1.7) {\color{red} \scriptsize $y$};
\node at (0.3+3.4,-1.7) {\color{red} \scriptsize $z$};
\node at (0.6,-6.4) {\color{red} \tiny $y$};
\node at (0.6+2.8,-6.4) {\color{red} \tiny $z$};
\node at (-1.1,3.8) {\color{red} \scriptsize $y$};
\node at (-1.1,3.8-3.5) {\color{red} \scriptsize $z$};
\end{tikzpicture}
\end{center}
\caption{
An illustration of part of the graph $S\Gamma(e)$ within the Cayley graph $\Delta$ of the maximal group image $G \ast \mathrm{Gp}\langle z, y \rangle$. 
Each square in the figure is a copy of the Cayley graph $\Gamma$ of $G$, each partitioned into the vertices $\Gamma_{XH}$ corresponding to the elements $XH$, depicted as shaded regions in the figure, and the complement of this set. 
The graph $S\Gamma(e)$ contains all the red edges and 
has vertex set equal to the union of all the copies $\Gamma$ to which these red edges are incident.    
The graph $\Delta'$ is the non-bi-deterministic quotient of $\Delta$ with vertices the copies of $\Gamma$ (i.e. the squares) and all $y$- and $z$-edges between them.
The graph $S\Gamma(e)'$ is the corresponding quotient of $S\Gamma(e)$.   
}\label{fig_pictorial}
\end{figure}

For the converse, since $M$ is $E$-unitary 
by Lemma~\ref{lem_fullSubgraph}
we may consider $S \Gamma(e)$ as a subgraph of the Cayley graph $\Delta$ of the maximal group image, which we have 
already observed is a free product of $G$ (generated by $A$) with a free group on $y$ and $z$.  This Cayley graph $\Delta$  partitions into a ``tree'' of copies of the Cayley
graph of $G$, in other words, a collection of copies of $\Gamma$ (which we shall call \textit{components}), joined by cut edges labelled $y$ and $z$. 
See Figure~\ref{fig_pictorial} for an illustration of the graph $\Delta$.  
Let $\Delta'$
be the quotient graph whose vertices are the components, and with an edge between two components exactly if $\Delta$ has an edge between two vertices in the
respective components with the same label. (Note that $\Delta'$ is not bi-deterministic; indeed it will typically have many edges leaving with the same label.)
Let $S\Gamma(e)'$ be the corresponding quotient of $S\Gamma(e)$.\footnote{
In general the precise structure of $S\Gamma(e)'$ will depend on the cardinalities of $G$ and $XH$ (which if infinite has the same cardinality as $H$ since $X$ is finite). If $\kappa$ is the cardinality of $G$ and $\mu$ is that of $XH$ then 
$\Delta'$ will be a tree where every vertex has 
$\kappa$ many in- and out-edges labelled by $y$ and also 
$\kappa$ many in- and out-edges labelled by $z$.  
The subgraph $S\Gamma(e)'$ will have a root vertex with $\kappa$ many out-edges labelled by each of $y$ and $z$, and $\mu$ many in-edges labelled by $y$ and $z$. 
Every vertex in $S\Gamma(e)'$ will have $\kappa$ many many out-edges labelled by each of $y$ and $z$. 
Any vertex with an in-edge labelled by $y$ will have $\mu$ many in-edges labelled by $y$, and   
any vertex with an in-edge labelled by $z$ will have $\mu$ many in-edges labelled by $z$. 
Furthermore $S\Gamma(e)'$ is a tree since it is a subgraph of the tree $\Delta'$.   
In particular the only vertex in $S\Gamma(e)'$ with both a $y$-edge and a $z$-edge coming in is the root vertex. 
It follows that all vertices of $S\Gamma(e)$ with both an $y$-edge and a $z$-edge coming in are within the component of $e$.
}
 We claim that the only vertex of $S\Gamma(e)'$ with both a $y$-edge
and a $z$-edge coming in is that corresponding to the component of $e$.

By Lemma~\ref{lem_fullSubgraph}, $S\Gamma(e)$ is equal to the smallest subgraph of the Cayley graph $\Delta$ such that $e$ can be read from $1$ and every relator  from the presentation for $M$ can be read at every vertex of $S\Gamma(e)$. 
It follows from this that $S \Gamma(e)'$ can be formed from the image in $\Delta'$ by iteratively tracing paths labelled by $yy'$ and $zz'$ in $\Delta'$, in other words, either adding a $y$-edge out of an existing vertex to a new vertex, or or adding a $y$-edge out of an existing vertex to a new vertex and another $y$-edge coming
into the new vertex, or adding a $y$-edge into an existing vertex which already has a $y$-edge into it, or corresponding operations with $z$-edges. Therefore, the only
way a vertex can acquire a $y$-edge coming in is if either (i) it is created as a new vertex at the end of a $y$-edge or (ii) it already had a $y$-edge coming in. This, any vertex
with a $y$-edge coming in (except for the component of $e$) must have been created with a $y$-edge coming in. The dual statement applies to $z$-edges. But
a vertex clearly cannot be created with both a $y$-edge and a $z$-edge coming in. 

Finally, suppose a vertex $v$ of $S \Gamma(e)$ within the component of $e$ has a $y$-edge coming in. Then by Lemma~\ref{lem_fullSubgraph} (or \cite[Lemma~3.1]{Stephen93}) the start $vy'$ of that edge must reachable from the vertices in the path traced out by $e$ in $\Delta$ (which means either a vertex in the component of $e$, or a vertex of the form $exy'$ or $exz'$ for some $x \in X$) by following a path labelled by a product of proper prefixes of the defining relators. Consider such a path where the number of
relator-prefixes is minimal. Consider the projection of $M$ onto $\mathbb{Z}$ taking all generators in $A$ to $0$ and the generators $y$ and $z$ to $1$. The entire component of $e$ clearly maps to $0$, and $exy'$, $exz'$ (for all $x \in X$) and $vy'$ map to $-1$. All relator-prefixes map to non-negative values, so the only way this can happen is if the path starts at $exy'$ or $exz'$ for some $x \in X$, and the relator-prefixes are of the form $yby'$ or $zbz'$ for $b \in B$. If we try to mix relator-prefixes of the form $yby'$ with those of the form $zbz'$ then the path clearly leaves the component of $e$, 
and can only return by eventually following $zb'z$ back to the same point, which contradicts the minimality of the number of relator-prefixes in the product. 
This can be seen e.g. by considering the structure of $S\Gamma(e)$ as a subgraph of the Cayley graph $\Delta$ of the maximal group image $G \ast \mathrm{Gp}\langle y, z \rangle$, as illustrated in Figure~\ref{fig_pictorial}, and observing that this fact is clearly true within the Cayley graph $\Delta$ of this free product and hence must also be true within the subgraph $S\Gamma(e)$.
Moreover, if we use only relator-prefixes of the form $zbz'$ we will clearly not end at $vy'$; so we must use relator-prefixes only of
the form $yby'$. By a similar argument, this means our path must start at $exy'$. So in summary, we have a path from $exy'$ to $vy'$ labelled by a product of words from $yBy'$. But this means there is a path from $ex$ to $v$ labelled by a product of words from $B$, which means that $v$ corresponds to an element of $XH$ as required.

Now let $\Omega$ be the graph which is the Cayley graph $\Gamma$ with additional $y$- and $z$-edges coming into the vertices corresponding to elements
of $XH$. Clearly, the automorphisms of $\Omega$ are exactly the automorphisms of $\Gamma$ which map elements of $XH$ to elements of $XH$, in other
words. The automorphisms all come from elements of $G$ acting by left translation, so this means the automorphism group of $\Omega$ is isomorphic to the
setwise stabiliser of $XH$ under the left translation action of $G$, in other words, to $K$.

Since $\Omega$ is a connected subgraph of
$S\Gamma(e)$ rooted at $e$ (in fact $\Omega$ is an induced subgraph of $S\Gamma(e)$), and since $e$ itself can be read in $\Omega$, it follows from Lemma~\ref{lem_StephenAutInvariant} that every automorphism of $\Omega$ will extend uniquely  
to an automorphism of $S\Gamma(e)$. On the other hand, any automorphism of $S\Gamma(e)$ must clearly preserve the set of vertices which have a both a $y$-edge
and a $z$-edge coming in, that is, the set of vertices corresponding to $XH$. It follows easily that it must preserve the embedded copy of $\Omega$, and hence
is an extension of an automorphism of $\Omega$. Thus,
$K$ is exactly the automorphism group of $S \Gamma(e)$, which by \cite[Theorem 3.5]{Stephen:1990ss} is isomorphic to the group $\GreenH$-class $H_e$ of $e$.\end{proof}

\begin{theorem}\label{thm_maxsubgroups}
Let $H$ be a recursively enumerable subgroup of a finitely presented group. Then there is an $E$-unitary finitely presented special inverse monoid with a
group $\GreenH$-class isomorphic to $H$.
\end{theorem}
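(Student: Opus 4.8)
The plan is to deduce Theorem~\ref{thm_maxsubgroups} by stringing together the three preliminary results we have just set up, namely Lemma~\ref{lemma_usefulgroup}, Lemma~\ref{lemma_stabiliser} and Theorem~\ref{thm_stabiliser}. The idea is that realising an arbitrary recursively enumerable subgroup as a group $\GreenH$-class can be reduced, via Lemma~\ref{lemma_usefulgroup}, to realising an intersection of two conjugate subgroups of a finitely presented group, and such an intersection can in turn be recognised (via Lemma~\ref{lemma_stabiliser}) as the setwise stabiliser of a union of finitely many left cosets, which is exactly the kind of group that Theorem~\ref{thm_stabiliser} is designed to produce. To begin, write $H \leq K$ where $K$ is finitely presented; by the remark following Lemma~\ref{lemma_usefulgroup}, $H$ being recursively enumerable is the same as $H$ being generated by a recursively enumerable subset of $K$, so $H$ is a recursively enumerable subgroup of $K$ in the sense required by that lemma.

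First I would apply Lemma~\ref{lemma_usefulgroup} to the pair $K, H$, obtaining a finitely presented group $G$ containing conjugate subgroups $K_1, K_2$ with $K_1 \cong K_2 \cong K$ and $K_1 \cap K_2 \cong H$, together with an element $t \in G$ satisfying $t K_1 t^{-1} = K_2$ and $K_1 \cap t K_1 t = \emptyset$. Next, set $X = \lbrace 1, t \rbrace$, a finite subset of $G$, and observe that the union of the left $K_1$-cosets determined by $X$ is $X K_1 = K_1 \cup t K_1$. Applying Lemma~\ref{lemma_stabiliser} with $K_1$ playing the role of its $K$ and $t$ playing the role of its $t$: because $K_1 \cap t K_1 t = \emptyset$, we are in the equality case, so
$$\lbrace g \in G \mid g(X K_1) = X K_1 \rbrace = \lbrace g \in G \mid g(K_1 \cup t K_1) = K_1 \cup t K_1 \rbrace = K_1 \cap t K_1 t^{-1} = K_1 \cap K_2,$$
which is isomorphic to $H$.

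Finally, I would invoke Theorem~\ref{thm_stabiliser} with this group $G$ (which is finitely presented) and the subgroup $K_1$ of $G$ (which is finitely generated, being isomorphic to the finitely presented group $K$). The theorem produces an $E$-unitary special inverse monoid $M$ such that for \emph{every} finite subset of $G$ — in particular for $X = \lbrace 1, t \rbrace$ — the setwise stabiliser in $G$ of the union of left $K_1$-cosets determined by that subset is isomorphic to a group $\GreenH$-class of $M$; and since $G$ is finitely presented and $K_1$ is finitely generated, $M$ may be taken finitely presented. By the previous paragraph this stabiliser is isomorphic to $H$, so $M$ has a group $\GreenH$-class isomorphic to $H$, as required. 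In the degenerate situations this all still works: if the group $G$ delivered by Lemma~\ref{lemma_usefulgroup} happens to be free, Theorem~\ref{thm_stabiliser} still yields a finitely presented $M$ (with one defining relation), and the cases where $K$ or $H$ is trivial cause no difficulty.

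I do not expect any genuine obstacle here: all of the depth lies in the preliminary results, above all in the geometric Sch\"utzenberger-graph construction behind Theorem~\ref{thm_stabiliser}. The only points requiring care are bookkeeping ones — that the ``moreover'' clause of Lemma~\ref{lemma_usefulgroup} is exactly what guarantees equality, rather than merely an index-$2$ overgroup, in Lemma~\ref{lemma_stabiliser}, and that the finite generation of $K_1$ (inherited from finite presentability of $K$) is precisely the hypothesis of Theorem~\ref{thm_stabiliser} needed to keep $M$ finitely presented.
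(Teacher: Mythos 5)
Your proposal is correct and follows essentially the same route as the paper's own proof: apply Lemma~\ref{lemma_usefulgroup} to produce $G$, $K_1$ and $t$, use the empty-intersection condition to land in the equality case of Lemma~\ref{lemma_stabiliser}, and then feed the stabiliser of $K_1 \cup tK_1$ into Theorem~\ref{thm_stabiliser} with $X = \lbrace 1, t\rbrace$. Your extra bookkeeping (finite generation of $K_1$, the role of the ``moreover'' clause) is exactly the right thing to check and is implicit in the paper's argument.
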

\begin{proof}
By Lemma~\ref{lemma_usefulgroup} there is a finitely presented group $G$ with a finitely generated subgroup $K$ and an element $t$ such that
$K \cap tKt^{-1} \cong H$ and $K \cap tKt = \emptyset$. By Lemma~\ref{lemma_stabiliser} this means $H$ is isomorphic
to the stabiliser of $K \cup tK$. Now by Theorem~\ref{thm_stabiliser} there is a finitely presented special
inverse monoid with the latter as a group $\GreenH$-class.
\end{proof}
\begin{corollary}\label{cor:NonFGExample}
There exist $E$-unitary finitely presented special inverse monoids with group $\GreenH$-classes which are not finitely generated.
\end{corollary}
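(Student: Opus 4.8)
The plan is to deduce the corollary immediately from Theorem~\ref{thm_maxsubgroups}, so the only real content is to exhibit a non-finitely-generated recursively enumerable subgroup of some finitely presented group. The cleanest choice is the free group of rank two, $G = \mathrm{Gp}\langle a, b \rangle$, together with its commutator subgroup $H = [G,G]$.

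First I would check that $H$ qualifies as a recursively enumerable subgroup of $G$. This is easy: an element of $G$ lies in $H$ precisely when, in its reduced form, the exponent sums of $a$ and of $b$ are both zero, so membership in $H$ is decidable; in particular $H$ is generated by a recursively enumerable (indeed recursive) subset of $G$, which by the remark following Lemma~\ref{lemma_usefulgroup} is exactly what is meant by $H$ being a recursively enumerable subgroup. Second, $H$ is not finitely generated: since $G/H \cong \mathbb{Z}^2$ is infinite, $H$ has infinite index in $G$, and by the Nielsen--Schreier index formula a subgroup of index $n$ in a free group of rank two is itself free of rank $n+1$; letting $n \to \infty$ shows that $H$ is free of countably infinite rank, hence certainly not finitely generated.

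It then only remains to invoke Theorem~\ref{thm_maxsubgroups} with this $H \leq G$: it produces an $E$-unitary finitely presented special inverse monoid having a group $\GreenH$-class isomorphic to $H$, and $H$ is not finitely generated, as required. I do not anticipate any obstacle here: once the group-theoretic example is in hand, the corollary is a one-line consequence of the machinery already established, and the only point warranting comment is the (routine) verification that $H$ is recursively enumerable. One could equally take the derived subgroup of any finitely presented group that is not finitely-generated-by-abelian, but the free group of rank two makes the argument shortest.
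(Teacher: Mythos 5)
Your proposal is correct and follows the paper's (implicit) argument exactly: the corollary is an immediate consequence of Theorem~\ref{thm_maxsubgroups} applied to any non-finitely-generated recursively enumerable subgroup of a finitely presented group, and your witness, the commutator subgroup of the free group of rank two, works just as well as the witness the paper later uses for its explicit illustration (Moldavanski\u{\i}'s example of conjugate free subgroups of $\mathrm{Gp}\langle a,b\rangle\times\mathbb{Z}$ with non-finitely-generated intersection). The only cosmetic quibble is that ``letting $n\to\infty$'' in the Nielsen--Schreier index formula is not literally an argument; it is cleaner to note that a nontrivial normal subgroup of infinite index in a free group cannot be finitely generated, or to count the non-tree edges of the Schreier graph on $\mathbb{Z}^2$.
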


To give a concrete illustration of Corollary~\ref{cor:NonFGExample},
in the next result we shall show how to construct a specific example of a finitely presented special inverse monoid in which there is a maximal subgroup that is not finitely generated.

\begin{proposition} 
Let $M$ be the inverse monoid defined by the presentation 
\begin{align*}
\mathrm{Inv}\langle a, b, q, t, y, z \ \mid \ & 
aqa'q'=1, \; bqb'q'=1, \; tat'a'=1, \; tbt'b'q'=1,  \\
& cc' = c'c=1 \ (c \in \{a,b,q,t\}), \\
& zdz'zd'z'=ydy'yd'y'=1 \ (d \in \{a, a', b, b' \}) \rangle.
\end{align*}
Then $M$ is a finitely presented 
$E$-unitary 
special inverse monoid that contains a group $\gh$-class that is not finitely generated. Specifically, the $\gh$-class $H_e $ in $M$ of the idempotent   
$e = (z' z y' y)(t z' z y' y t')$ is not 
finitely generated. 
  \end{proposition}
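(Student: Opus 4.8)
The strategy is to recognise this presentation as an instance of the general construction in Theorem~\ref{thm_stabiliser}, applied to a carefully chosen group $G$ and subgroup $H$. Concretely, let $G = \mathrm{Gp}\langle a,b,q,t \mid aqa'q'=1,\ bqb'q'=1,\ tat'a'=1,\ tbt'b'q'=1\rangle$ and let $K = \langle a,b\rangle \le G$. Then $M$ is exactly the inverse monoid produced by Theorem~\ref{thm_stabiliser} starting from this $G$ and with $B = \{a,b\}$ being the chosen generating set for the subgroup $H := K \cap tKt^{-1}$ (so the penultimate family of relators $zdz'zd'z'=ydy'yd'y'=1$ ranges over $d \in B \cup B^{-1} = \{a,a',b,b'\}$, as in the statement). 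With $X = \{1, t\}$ (a finite subset of $G$), we have $XH = H \cup tH$, the element $e$ of Theorem~\ref{thm_stabiliser} is $\prod_{x\in X} x z'z y'y x' = (z'zy'y)(tz'zy't')$, which up to the harmless reordering of commuting fundamental-idempotent factors is exactly the idempotent $e$ in the statement. So Theorem~\ref{thm_stabiliser} immediately gives that $M$ is $E$-unitary, finitely presented special, and that $H_e$ is isomorphic to the setwise stabiliser $\{g \in G : g\,XH = XH\}$.

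First I would verify, via Lemma~\ref{lemma_stabiliser}, that this stabiliser is the group we want. We have $XH = K \cup tK$ after replacing $H$ by $K$ inside the cosets — wait, more carefully: $XH = \{1,t\}H = H \cup tH$, but since $K$ is a union of left $H$-cosets, $KH = K$, so actually I should take $X$ to be a transversal giving $XH = K \cup tK$. The cleanest route: observe $K$ and $tK$ are each unions of left $H$-cosets (as $H \le K$), pick $X$ to be $\{1,t\}$ together with coset representatives for $H$ in $K$, so that $XH = K \cup tK$; this $X$ is finite iff $[K:H] < \infty$, which needs checking, OR — better and matching the paper's machinery — one directly runs the argument of Theorem~\ref{thm_maxsubgroups}: by Lemma~\ref{lemma_stabiliser}, provided $K \cap tKt = \emptyset$, the setwise stabiliser of $K \cup tK$ is exactly $K \cap tKt^{-1}$. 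So the plan is: (i) compute $G$ and identify it as (a quotient of) a group where $K = \langle a,b\rangle$ and $t$ behave as needed; (ii) check $K \cap tKt^{-1}$ is not finitely generated; (iii) check $K \cap tKt = \emptyset$; (iv) invoke Lemma~\ref{lemma_stabiliser} and Theorem~\ref{thm_stabiliser} to conclude $H_e \cong K \cap tKt^{-1}$ is not finitely generated.

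For step (i)–(iii): the relations say $q$ commutes with both $a$ and $b$ (from $aqa'q'=bqb'q'=1$), conjugation by $t$ sends $a \mapsto a$ and $b \mapsto bq$ (from $tat'a'=1$ and $tbt'b'q'=1$, i.e. $tbt' = bq$). So $G$ is an HNN-type / semidirect construction: $\langle a,b\rangle$ with $q$ central-ish, and $t$ acting. In fact $K = \langle a, b\rangle$; if $a,b$ (and $q$) are chosen so that $K \cong \mathbb{Z}^2$ or $F_2$ — most likely the intended picture is that $\langle a,b,q\rangle$ is free abelian of rank $3$ or that $K = \langle a,b \rangle$ is free of rank $2$ — then $tKt^{-1} = \langle a, bq\rangle$ and $K \cap tKt^{-1}$ is the classical example of an infinitely generated intersection of two finitely generated subgroups (this is exactly the Mihailova / Stallings–Bieri phenomenon, and the use of $q$ "shifting" $b$ is the standard device). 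I would make this precise by exhibiting $G$ as $(\langle a,b,q\rangle) \rtimes \langle t\rangle$ (or as an explicit right-angled Artin group / its subgroup), computing $tKt^{-1} = \langle a, bq\rangle$, and showing $K \cap \langle a, bq \rangle$ contains elements like $b^n a b^{-n} \cdot(\text{correction})$ for all $n$ but no finite generating set — the Bieri–Stallings style computation. The disjointness $K \cap tKt = \emptyset$: $tKt = t^2 (t^{-1}Kt)\,$... more directly, elements of $K$ have $t$-exponent-sum $0$ while elements of $tKt$ have $t$-exponent-sum $2$, so the intersection is empty — this is the Britton's-Lemma-style observation already used in Lemma~\ref{lemma_usefulgroup}.

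**Main obstacle.** The genuinely substantive step is (ii): proving that $K \cap tKt^{-1} = \langle a,b\rangle \cap \langle a, bq\rangle$ is \emph{not} finitely generated, and correctly pinning down the group $G$ so that this holds. This requires identifying the precise structure of $G$ (almost certainly $G$ is built so that $\langle a, b, q\rangle \cong \mathbb{Z}^3$ and $t$ acts by $a \mapsto a$, $b \mapsto bq$, $q \mapsto q$, making $G$ a free-by-cyclic or poly-$\mathbb{Z}$ group in which the Bieri–Stallings subgroup appears) and then running the standard argument that the kernel-type subgroup $K \cap tKt^{-1}$ is infinitely generated — e.g. by mapping onto $\mathbb{Z}$ via the $a$-exponent and showing the "fibre" needs infinitely many generators, or by a direct normal-form argument. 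Everything else (the translation to the inverse monoid, $E$-unitarity, the coset-stabiliser computation) is bookkeeping that Theorem~\ref{thm_stabiliser} and Lemma~\ref{lemma_stabiliser} hand us essentially for free, once the correct $X = \{1,t\}$ and the identification of $e$ are recorded.
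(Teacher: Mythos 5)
Your proposal follows essentially the same route as the paper: instantiate Theorem~\ref{thm_stabiliser} with $X=\{1,t\}$, reduce via Lemma~\ref{lemma_stabiliser} to showing that $K\cap tKt^{-1}$ is not finitely generated while $K\cap tKt=\emptyset$, and identify $e$ with $\prod_{x\in X}xz'zy'yx'$. The one place where you must commit is the structure of $\langle a,b,q\rangle$: you hedge between $\mathbb{Z}^3$ and (free of rank $2$)${}\times\mathbb{Z}$, and the $\mathbb{Z}^3$ reading --- which you at one point call ``almost certain'' --- would sink the proof, since then $\langle a,b\rangle\cap\langle a,qb\rangle=\langle a\rangle\cong\mathbb{Z}$ is finitely generated. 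The presentation contains no relator commuting $a$ with $b$, and the paper builds $G$ explicitly as an HNN extension of $L=\mathrm{Gp}\langle a,b\rangle\times\mathbb{Z}$ with stable letter $t$ conjugating $H=\langle a,b\rangle$ to $U=\langle a,qb\rangle$; this requires the small verification (done by a Tietze transformation in the paper) that $U$ is also free of rank $2$ on $\{a,qb\}$, so that the HNN extension is well defined and $L$, hence $H$, embeds in $G$. The substantive step you flag --- non-finite-generation of $H\cap U$ --- is closed in the paper by citing Moldavanski\u{\i}: $H\cap U$ is exactly the set of words in $a,b$ of zero $b$-exponent sum, i.e.\ the normal closure of $a$ in the free group $\langle a,b\rangle$, which is free of infinite rank; your sketch with the elements $b^nab^{-n}=(qb)^na(qb)^{-n}$ is precisely this computation, so it goes through once the free (rather than abelian) structure is fixed. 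Your $t$-exponent-sum argument for $K\cap tKt=\emptyset$ is a fine substitute for the paper's appeal to Britton's Lemma.
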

\begin{proof}

We begin with the direct product $L$ of a free group of rank $2$ and the infinite cyclic group   
\[
L = \mathrm{Gp}\langle a,b \rangle \times \mathbb{Z} 
=
\mathrm{Gp}\langle a, b, q \mid
aq=qa, \;
bq=qb 
\rangle.  
\]
Let $H = \langle a,b \rangle \leq L$ be the free subgroup generated by $\{a,b\}$, 
and $U = \langle a, qb \rangle \leq L$
the subgroup generated by $\{a,qb\}$.    
It is immediate from the definitions that $H$ is isomorphic to the free group of rank $2$ with respect to the generating set $\{a,b\}$.     
It is straightforward to show that $U$ is also isomorphic to a free group of rank $2$ with respect to the generating set $\{a, qb\}$; indeed, observe that
\begin{align*}
& \; \; \; \; \, \mathrm{Gp}\langle a, b, q \mid
aq=qa, \;
bq=qb 
\rangle \\
&= \mathrm{Gp}\langle a, b, q, x \mid
x=qb, \;
aq=qa, \;
bq=qb 
\rangle  \\  
&= \mathrm{Gp}\langle a, x, q \mid
%x=qb, \;
aq=qa, \;
%%q^{-1}
xq=qx 
\rangle \cong   
\mathrm{Gp}\langle a,x \rangle \times \mathbb{Z}, 
\end{align*}
from which it is clear the subgroup generated by $\{x=qb, a \}$ is free of rank $2$ with respect to this generating set.   

Furthermore, it was proved by Moldavanski\u{\i} in \cite[Lemma~1]{Moldavanski68} that $H \cap U \leq L$ is not finitely generated. Indeed, the proof of \cite[Lemma~1]{Moldavanski68} shows that $H \cap U$ is equal to the normal closure of $a$ in the free group $\mathrm{Gp}\langle a,b \rangle$ which is shown to be a free group of infinite rank and hence not finitely generated.   

Since $H$ and $U$ 
are both isomorphic to the free group of rank $2$ 
we can form the HNN extension 
\[
G = 
\mathrm{Gp}\langle a, b, q, t \mid  
aq=qa, \;
bq=qb, \;
tat^{-1}=a, \;
tbt^{-1}=qb \rangle
\]
of $L$ with respect to the isomorphism $\phi:H \rightarrow U$ that maps $a \mapsto a$ and $b \mapsto qb$. 
Since $G$ is an HNN extension of $L$ it follows that $L$ naturally embeds in $G$ and hence so do $H$ and $U$. 
Hence $H = \langle a, b \rangle \leq G$ is a finitely generated subgroup $G$ that is isomorphic to the free group of rank $2$, and in $G$ we have that    
$
H \cap tHt^{-1} = H \cap \phi(H) = H \cap U 
$          
which is not finitely generated. 
Furthermore, it follows from Britton's Lemma applied to the HNN extension $L$ that   
$H \cap tHt = \varnothing$.  
Hence by Lemma~\ref{lemma_stabiliser} 
\[
K = \{ g \in G : g(H \cup tH) = H \cup tH \} = H \cap t H t^{-1}
\]
and hence in $G$ the stabiliser $K$ 
of the union of cosets 
$H \cup tH$ 
is not finitely generated.   

Now using this group $G$, the subgroup $H$, and the coset representatives $X = \{1_G, t \}$, it then follows from      
Theorem~\ref{thm_stabiliser} 
and its proof that 
that if 
$M$ is the 
inverse monoid defined by the presentation 
\begin{align*}
\mathrm{Inv}\langle \underbrace{a, b, q, t,}_{A} y, z \ \mid \ & 
\underbrace{aqa'q'=1, \; bqb'q'=1, \; tat'a'=1, \; tbt'b'q'=1}_{R},  \\
& cc' = c'c=1 \ (c \in \{a,b,q,t\}), \\
& zdz'zd'z'=ydy'yd'y'=1 \ (d \in \underbrace{\{a, a', b, b' \}}_{B} ) \rangle.
\end{align*}
then $M$ is a finitely presented 
$E$-unitary 
special inverse monoid that contains a group $\gh$-class that is not finitely generated. 
In particular, 
from $X = \{1_G, t \}$
we obtain the idempotent  
\[
e = (z' z y' y)(t z' z y' y t') 
\]
in the proof of 
Theorem~\ref{thm_stabiliser}
and then the maximal subgroup $H_e$ of $M$ is isomorphic to $K$ and hence is not finitely generated.  
\end{proof}

Another corollary of Theorem~\ref{thm_maxsubgroups} is the following.

\begin{corollary}\label{cor_possiblemaxsubgroups}
The possible group $\GreenH$-classes of finitely presented special inverse monoids (and of $E$-unitary finitely presented, recursively presented, and
$E$-unitary recursively presented special inverse monoids) are exactly the (not necessarily finitely generated) recursively presented groups.
\end{corollary}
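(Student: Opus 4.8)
The plan is to establish the corollary by combining Theorem~\ref{thm_maxsubgroups} with a routine realisability argument for the reverse inclusion, and by checking the various parenthetical classes collapse to the same answer. First I would prove the ``only if'' direction: any group $\GreenH$-class of a finitely presented (or merely recursively presented) special inverse monoid $M$ is recursively presented. Given such an $M = \mathrm{Inv}\langle A \mid R\rangle$ with $R$ recursively enumerable, fix an idempotent $e$ and pick a word $w_0$ over $A^{\pm1}$ representing it. One enumerates the equalities of words holding in $M$ (Stephen's procedure, or equivalently enumerating consequences of the Wagner congruence together with $R$); from these one extracts a recursive enumeration of a generating set for $H_e$ (as words $v$ with $ev e \mathrel{\GreenH} e$, suitably normalised so that $w_0 v$ represents an $\GreenH$-related idempotent-rooted element), and then a recursive enumeration of all relations holding among those generators. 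This shows every such $\GreenH$-class is recursively presented, and it applies uniformly to all four classes mentioned (finitely presented, $E$-unitary finitely presented, recursively presented, $E$-unitary recursively presented), since all of them have recursively enumerable defining relations.

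For the ``if'' direction, let $H$ be an arbitrary recursively presented group. By the Higman embedding theorem \cite{Higman61}, $H$ embeds in a finitely presented group, and the image is a recursively enumerable subgroup of that finitely presented group (as noted in the remark after Lemma~\ref{lemma_usefulgroup}, a subgroup of a finitely presented group is recursively enumerable precisely when it is generated by a recursively enumerable set, and a finitely presented embedding of a recursively presented group has exactly this property). Hence $H$ is (isomorphic to) a recursively enumerable subgroup of a finitely presented group, and Theorem~\ref{thm_maxsubgroups} produces an $E$-unitary finitely presented special inverse monoid with a group $\GreenH$-class isomorphic to $H$. Since an $E$-unitary finitely presented special inverse monoid is in particular finitely presented, $E$-unitary recursively presented, and recursively presented, this single construction witnesses the reverse inclusion simultaneously for all four classes in the statement.

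Putting the two directions together: for each of the four classes, the possible group $\GreenH$-classes are contained in the recursively presented groups (by the enumeration argument) and contain all recursively presented groups (by Theorem~\ref{thm_maxsubgroups}), so they coincide with the recursively presented groups. I do not expect any genuine obstacle here; the content is all in the earlier sections, and the only mildly delicate point is making the enumeration in the first paragraph precise --- in particular, being careful that ``recursively enumerate a generating set for $H_e$, then recursively enumerate the relations among them'' really does yield a recursive presentation, which follows from the standard fact that a group generated by a recursively enumerable set with recursively enumerable word problem (relative to that generating set) is recursively presented, together with the observation that membership of a word in $H_e$ and equality of two such words are both recognised by the enumeration of consequences of the presentation.
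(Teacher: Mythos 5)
Your proposal is correct and follows essentially the same route as the paper: the reverse inclusion via Higman embedding plus Theorem~\ref{thm_maxsubgroups}, and the forward inclusion by recursively enumerating the words $v$ that are $\GreenH$-related to the chosen idempotent (the paper uses the concrete semidecidable criterion $vv'=w$ and $v'v=w$) together with the relations holding among them. No substantive differences.
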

\begin{proof}
Higman \cite[Corollary to Theorem 1]{Higman61} showed that every recursively presented group embeds ``effectively'' in a finitely presented group; although an
exact definition of ``effectively'' is not given, it is clear from the argument that this implies that the image will be a recursively enumerable subgroup of the finitely
presented group. Hence, by Theorem~\ref{thm_maxsubgroups}, every such group is a group $\GreenH$-class of some $E$-unitary finitely presented special inverse monoid.

Conversely, suppose $M = \mathrm{Inv}\langle A \mid R \rangle$ is a finitely (or recursively) presented special inverse monoid, and let $e \in M$ be an idempotent represented by some word $w$ over $A^{\pm 1}$. Since one can recursively enumerate relations which hold in $M$, one can enumerate all words in the $\GreenH$-class of $e$ (by listing a word $v$ as soon as one discovers that the relations $vv'=w$ and $v'v = w$ hold in $M$), and all relations which hold between such
words. Thus, the $\GreenH$-class of $e$ is a recursively presented group.
\end{proof}

The fact that the construction in Theorem~\ref{thm_stabiliser} preserves the number of relators in the underlying group (except when free) means it can
be applied in particular to one-relator inverse monoids.

\begin{corollary}\label{cor_onerelator}
Every finitely generated subgroup of a one-relator group arises as a group $\GreenH$-class in a one-relator special inverse monoid.
\end{corollary}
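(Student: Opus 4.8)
The plan is to obtain this as an immediate specialisation of Theorem~\ref{thm_stabiliser}, taking for the finite set $X$ the trivial singleton $X = \{1_G\}$, and then reading off the relator count.

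First I would fix a finitely generated subgroup $H$ of a one-relator group $G$, and fix a one-relator presentation $G = \mathrm{Gp}\langle A \mid r \rangle$ with $A$ finite; thus $G$ is finitely presented and $H$ is finitely generated, so the hypotheses of the second sentence of Theorem~\ref{thm_stabiliser} are met. Taking $X = \{1_G\}$ (a finite subset of $G$), we have $XH = H$, and hence the setwise stabiliser
$$K = \{ g \in G \mid g XH = XH \} = \{ g \in G \mid gH = H \}$$
is exactly $H$ itself, since $gH = H$ holds precisely when $g \in H$.

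Next I would invoke Theorem~\ref{thm_stabiliser} with this $G$, $H$ and $X$: it produces an $E$-unitary special inverse monoid $M$ possessing a group $\GreenH$-class isomorphic to $K = H$. The crucial point for the present corollary is the relation-counting clause of that theorem. Since $G$ is finitely presented and $H$ is finitely generated, $M$ may be chosen finitely presented with the same number of defining relations as $G$, namely one; and in the degenerate case where the relator $r$ is trivial, so that $G$ is free, Theorem~\ref{thm_stabiliser} still delivers an $M$ with exactly one defining relation. Either way $M$ is a one-relator special inverse monoid in which $H_e \cong H$ for a suitable idempotent $e$, which is the assertion.

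I do not expect any real obstacle here: all the substance is contained in Theorem~\ref{thm_stabiliser}, and the only point needing attention is to notice that the passage from the group $G$ to the monoid $M$ preserves the property of being ``one-relator'' — including in the free case, which is exactly the situation covered by the parenthetical remark in the statement of Theorem~\ref{thm_stabiliser}.
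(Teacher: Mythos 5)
Your proposal is correct and is precisely the argument the paper intends: the corollary is stated as an immediate consequence of the relator-counting clause of Theorem~\ref{thm_stabiliser}, applied with $X=\{1_G\}$ so that the stabiliser $K=\{g\mid gH=H\}$ is $H$ itself. The only (harmless) imprecision is the phrase ``where the relator $r$ is trivial, so that $G$ is free'' --- a one-relator group can be free with a non-trivial relator --- but the theorem's parenthetical covers the free case regardless, so $M$ has one defining relation either way.
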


We do not know any examples of one-relator special inverse monoids ($E$-unitary or otherwise) containing group $\GreenH$-classes which are not embeddable in a one-relator group, or which are not finitely generated.
\begin{question}
Is every group $\GreenH$-class in a one-relator special inverse monoid necessarily embeddable in a one-relator group?
\end{question}
\begin{question}\label{question_onerelatorfp}
Is every group $\GreenH$-class in a one-relator special inverse monoid necessarily finitely generated?
\end{question}
Recall that a group is said to have the \textit{Howson property} (or \textit{finitely generated intersection property}) if the intersection of two finitely generated subgroups
is always finitely generated. Free groups have this property \cite[Proposition~3.13]{Lyndon:2001lh} but in contrast there are one-relator groups (even hyperbolic ones) which do not \cite{KarrassSolitar1969,Kapovich1999}. However, we do not know of any example of a one-relator group containing two \textit{conjugate} finitely generated subgroups whose intersection is not finitely generated; nor are we aware of any proof that this cannot happen. If it can happen then this would imply a negative answer to Question~\ref{question_onerelatorfp}. Indeed, by applying Lemma~\ref{lemma_stabiliser} and Theorem~\ref{thm_stabiliser} we could obtain an $E$-unitary one-relator special inverse monoid with 
a group $\GreenH$-class that is a finite
index overgroup of a group that is not finitely generated, and hence by \cite[Proposition~4.2]{Lyndon:2001lh} not itself not finitely generated.
\begin{question}
Is the intersection of two conjugate finitely generated subgroups in a one-relator group necessarily finitely generated?
\end{question}

Another application of Theorem~\ref{thm_stabiliser} is to construct examples of finitely presented special inverse monoids with infinitely many pairwise non-isomorphic group $\GreenH$-classes. This contrasts sharply with the case of special (non-inverse) monoids, where by a result of Malheiro \cite{Malheiro2005} all idempotents lie in the $\GreenD$-class of $1$, and therefore all group $\GreenH$-classes are necessarily isomorphic to each other.
\begin{corollary}\label{cor_everyfinitegroup}
There exists an $E$-unitary finitely presented special inverse monoid (in fact, a free product of a finitely presented group with a finite rank free inverse monoid) in which every finite group arises as a group $\GreenH$-class.
\end{corollary}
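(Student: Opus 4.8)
The plan is to combine Theorem~\ref{thm_stabiliser} with the classical existence of a single finitely presented group that contains a copy of every finite group. First I would invoke the result of Higman \cite[Theorem~7.3]{Lyndon:2001lh} to fix a finitely presented group $G$ into which every finite group (indeed every recursively presented group) embeds.

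Next I would apply Theorem~\ref{thm_stabiliser} to this $G$ with the subgroup $H$ taken to be trivial. This yields an $E$-unitary special inverse monoid $M$ which, since $H$ is trivial, can be chosen to be the free product of $G$ with a free inverse monoid of rank $2$; and since $G$ is finitely presented while $H$ is (trivially) finitely generated, $M$ is finitely presented. Thus $M$ already has the structural form asserted in the statement.

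It then remains to check that every finite group occurs as a group $\GreenH$-class of $M$. Fix a finite group $F$ and an embedding $\iota\colon F \hookrightarrow G$, and set $X = \iota(F)$, a finite subset of $G$. Because $X$ is a subgroup of $G$, its setwise stabiliser under the left translation action,
$$K = \lbrace g \in G \mid gX = X \rbrace,$$
is exactly $X$ itself: if $g \in X$ then $gX = X$ by closure and invertibility in the subgroup, while if $gX = X$ then $g = g\cdot 1_G \in gX = X$ because $1_G \in X$. Since $H$ is trivial we have $XH = X$, so Theorem~\ref{thm_stabiliser} gives that $K \cong X \cong F$ is isomorphic to a group $\GreenH$-class in $M$. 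As $F$ was an arbitrary finite group, this completes the argument.

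I would not expect any serious obstacle: essentially all the work is already done by Theorem~\ref{thm_stabiliser} together with the existence of a universal finitely presented group, and the only extra point is the elementary observation that the left setwise stabiliser of a subgroup, viewed as a subset, is that subgroup. The single detail requiring a moment's care is confirming that the trivial-$H$ case of Theorem~\ref{thm_stabiliser} genuinely applies here and really does deliver the claimed free-product description with a finite rank free inverse monoid factor.
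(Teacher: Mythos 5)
Your proposal is correct and follows essentially the same route as the paper: take Higman's universal finitely presented group as $G$, apply Theorem~\ref{thm_stabiliser} with $H$ trivial, and observe that each embedded finite group is its own setwise stabiliser under left translation. The only difference is that you spell out the elementary stabiliser verification which the paper merely asserts.
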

\begin{proof}
Take $G$ to be any finitely presented group with every finite group as a subgroup (for example, Higman's universal group into which embeds every finitely generated group \cite[Theorem~7.3]{Lyndon:2001lh}), and $H$ to be the trivial group. Then every finite group arises as a finite union of left $H$-cosets and is its own stabiliser under left translation, so Theorem~\ref{thm_stabiliser}
gives a finitely presented $E$-unitary special inverse monoid (in fact, a free product of $G$ with a finite rank free inverse monoid) in which they all arise as group
$\GreenH$-classes.
\end{proof}

We conclude by remarking on a natural and related, if slightly tangential, open question.
Belyaev \cite{Belyaev} showed that an analogue of Higman's embedding theorem holds for inverse semigroups: every recursively presented inverse
semigroup embeds in a finitely presented one. We do not know if a corresponding result holds for special inverse monoids.
\begin{question}
Does every recursively presented special inverse monoid embed as a subsemigroup (or even as a submonoid) in a finitely presented special inverse monoid?
\end{question}

The results of this section also lead naturally to the question of what can be said about maximal submonoids of a finitely presented special inverse monoid $M$. These are the submonoids of the form $eMe$ where $e$ is an idempotent of $M$. Can these monoids be completely described? The results of this section already give some information about this class of monoids: for example the group of units of $eMe$ is equal to the $\gh$-class $H_e$ of $M$ and hence need not be finitely generated.


\begin{thebibliography}{10}

\bibitem{Adjan:1966bh}
S.~I. Adjan.
\newblock Defining relations and algorithmic problems for groups and
  semigroups.
\newblock {\em Trudy Mat. Inst. Steklov.}, 85:123, 1966.

\bibitem{Baumslag1974}
G.~Baumslag.
\newblock Some problems on one-relator groups.
\newblock In {\em Proceedings of the {S}econd {I}nternational {C}onference on
  the {T}heory of {G}roups ({A}ustralian {N}at. {U}niv., {C}anberra, 1973)},
  Lecture Notes in Math., Vol. 372, pages 75--81. Springer, Berlin, 1974.

\bibitem{Belyaev}
V.~Y. Belyaev.
\newblock Imbeddability of recursively defined inverse semigroups in finitely
  presented semigroups.
\newblock {\em Sibirsk. Mat. Zh.}, 25(2):50--54, 1984.

\bibitem{Gray2020}
R.~D. Gray.
\newblock Undecidability of the word problem for one-relator inverse monoids
  via right-angled {A}rtin subgroups of one-relator groups.
\newblock {\em Invent. Math.}, 219(3):987--1008, 2020.

\bibitem{GrayRuskucUnits}
R.~D. Gray and N.~Ru\v{s}kuc.
\newblock On units of special and one-relator inverse monoids.
{\em J. Inst. Math. Jussieu}, Published online 2023:1--44. doi:10.1017/S1474748023000439. 

\bibitem{Hermiller:2010bs}
S.~Hermiller, S.~Lindblad, and J.~Meakin.
\newblock Decision problems for inverse monoids presented by a single sparse
  relator.
\newblock {\em Semigroup Forum}, 81(1):128--144, 2010.

\bibitem{Higman61}
G.~Higman.
\newblock Subgroups of finitely presented groups.
\newblock {\em Proc. Roy. Soc. London Ser. A}, 262:455--475, 1961.

\bibitem{Howie95}
J.~M. Howie.
\newblock {\em Fundamentals of semigroup theory}, volume~12 of {\em London
  Mathematical Society Monographs. New Series}.
\newblock The Clarendon Press, Oxford University Press, New York, 1995.
\newblock Oxford Science Publications.

\bibitem{Ivanov:2001kl}
S.~V. Ivanov, S.~W. Margolis, and J.~C. Meakin.
\newblock On one-relator inverse monoids and one-relator groups.
\newblock {\em J. Pure Appl. Algebra}, 159(1):83--111, 2001.

\bibitem{Kapovich1999}
I.~Kapovich.
\newblock Howson property and one-relator groups.
\newblock {\em Comm. Algebra}, 27(3):1057--1072, 1999.

\bibitem{KarrassSolitar1969}
A.~Karrass and D.~Solitar.
\newblock On the failure of the {H}owson property for a group with a single
  defining relation.
\newblock {\em Math. Z.}, 108:235--236, 1969.

\bibitem{Lyndon:2001lh}
R.~C. Lyndon and P.~E. Schupp.
\newblock {\em Combinatorial group theory}.
\newblock Classics in Mathematics. Springer-Verlag, Berlin, 2001.
\newblock Reprint of the 1977 edition.

\bibitem{Malheiro2005}
A.~Malheiro.
\newblock Complete rewriting systems for codified submonoids.
\newblock {\em Internat. J. Algebra Comput.}, 15(2):207--216, 2005.

\bibitem{Margolis:2005il}
S.~W. Margolis, J.~Meakin, and Z.~{\v{S}}uni{{\'k}}.
\newblock Distortion functions and the membership problem for submonoids of
  groups and monoids.
\newblock In {\em Geometric methods in group theory}, volume 372 of {\em
  Contemp. Math.}, pages 109--129. Amer. Math. Soc., Providence, RI, 2005.

\bibitem{MMS87}
S.~W. Margolis, J.~C. Meakin, and J.~B. Stephen.
\newblock Some decision problems for inverse monoid presentations.
\newblock In {\em Semigroups and their applications ({C}hico, {C}alif., 1986)},
  pages 99--110. Reidel, Dordrecht, 1987.

\bibitem{Meakin:2007zt}
J.~Meakin.
\newblock Groups and semigroups: connections and contrasts.
\newblock In {\em Groups {S}t. {A}ndrews 2005. {V}ol. 2}, volume 340 of {\em
  London Math. Soc. Lecture Note Ser.}, pages 357--400. Cambridge Univ. Press,
  Cambridge, 2007.

\bibitem{Moldavanski68}
D.~I. Moldavanski\u{\i}.
\newblock The intersection of finitely generated subgroups.
\newblock {\em Sibirsk. Mat. \v{Z}.}, 9:1422--1426, 1968.

\bibitem{Munn74}
W.~D. Munn.
\newblock Free inverse semigroups.
\newblock {\em Proc. London Math. Soc. (3)}, 29:385--404, 1974.

\bibitem{Perrin1984}
D.~Perrin and P.~Schupp.
\newblock Sur les mono\"{\i}des \`a un relateur qui sont des groupes.
\newblock {\em Theoret. Comput. Sci.}, 33(2-3):331--334, 1984.

\bibitem{Scheiblich73}
H.~E. Scheiblich.
\newblock Free inverse semigroups.
\newblock {\em Proc. Amer. Math. Soc.}, 38:1--7, 1973.

\bibitem{Schein1975}
B.~M. Schein.
\newblock Free inverse semigroups are not finitely presentable.
\newblock {\em Acta Math. Acad. Sci. Hungar.}, 26:41--52, 1975.

\bibitem{Stephen:1990ss}
J.~B. Stephen.
\newblock Presentations of inverse monoids.
\newblock {\em J. Pure Appl. Algebra}, 63(1):81--112, 1990.

\bibitem{Stephen93}
J.~B. Stephen.
\newblock Inverse monoids and rational subsets of related groups.
\newblock {\em Semigroup Forum}, 46(1):98--108, 1993.

\bibitem{Stephen98}
J.~B. Stephen.
\newblock Amalgamated free products of inverse semigroups.
\newblock {\em J. Algebra}, 208(2):399--424, 1998.

\bibitem{WiseCoherence2020}
D.~T. Wise.
\newblock An invitation to coherent groups.
\newblock In {\em What's next?---the mathematical legacy of {W}illiam {P}.
  {T}hurston}, volume 205 of {\em Ann. of Math. Stud.}, pages 326--414.
  Princeton Univ. Press, Princeton, NJ, 2020.

\end{thebibliography}
\end{document}